\documentclass[11pt,reqno,a4paper]{amsart}

\usepackage{amssymb,epsfig,graphics}
\usepackage{amsmath}

\newtheorem{theorem}{Theorem}[section]

\newtheorem{corollary}[theorem]{Corollary}
\newtheorem{lemma}[theorem]{Lemma}
\newtheorem{sub-lemma}[theorem]{Sub-Lemma}

\newtheorem{remark}[theorem]{Remark}

\def\L{\mathcal{L}}

\def\RR{\mathbb{R}}

\let\eps=\varepsilon

\def\B{\mathcal{B}}

\def\RR{{\mathbb R}}
\def\1{{{\mathit 1} \!\!\>\!\! I} }

\newcommand{\pa}{\partial_{\alpha}}

\begin{document}

\title{Linear response in the intermittent family: differentiation in a weighted $C^0$-norm}
\author{Wael Bahsoun}
\address{Department of Mathematical Sciences, Loughborough University,
Loughborough, Leicestershire, LE11 3TU, UK}
\email{W.Bahsoun@lboro.ac.uk}
\author{Beno\^\i t Saussol}
\address{Universit\'e de Brest, Laboratoire de
Math\'ematiques de Bretagne Atlantique, CNRS UMR 6205, Brest, France}
\email{benoit.saussol@univ-brest.fr}
\thanks{This work was conducted during mutual visits of WB to Universit\'e de Bretagne Occidentale and of BS to Loughborough University. WB and BS would like to thank The Leverhulme Trust for supporting their research visits through the Network Grant IN-2014-021. 
}
\keywords{Linear response, Intermittent maps}
\subjclass{Primary 37A05, 37E05}
\begin{abstract}
We provide a general framework to study differentiability of SRB measures for one dimensional non-uniformly expanding maps. Our technique is based on inducing the non-uniformly expanding system to a uniformly expanding one, and on showing how the linear response formula of the non-uniformly expanding system is inherited from the linear response formula of the induced one. We apply this general technique to interval maps with a neutral fixed point (Pomeau-Manneville maps) to prove differentiability of the corresponding SRB measure. Our work covers systems that admit a finite SRB measure and it also covers systems that admit an infinite SRB measure. In particular, we obtain a linear response formula for both finite and infinite SRB measures. To the best of our knowledge, this is the first work that contains a linear response result for infinite measure preserving systems.
\end{abstract}
\date{\today}
\maketitle
\markboth{Wael Bahsoun \and Beno\^\i t Saussol}{Linear response in the intermittent family}
\bibliographystyle{plain}
\tableofcontents
\section{Introduction}
In physical applications of dynamical systems, it is important to understand how statistical properties of a perturbed physical system are related to statistical properties of the original system; i.e., before the occurrence of the perturbation. In particular, it is always desirable to write a first order approximation of the Sinai-Ruelle-Bowen (SRB) measure of the perturbed system in terms of the SRB measure of the original system. In smooth ergodic theory, this direction of research, which was pioneered by David Ruelle, is called differentiation (with respect to noise) of SRB measures. In the physics literature the equivalent term is called `linear response'.  

\bigskip

Linear response has been proved for several classes of smooth dynamical systems that admit exponential, or at least summable, decay of correlations \cite{Ba1, BS, BL, D, GL, KP, R}. Negative results, where linear response does not hold, are also known \cite{Ba1, Ba2}. A recent survey on the progress in this area of research is \cite{Ba2}. More recently, results on the linear response of polynomially mixing systems that admit a probabilistic SRB measure were announced in \cite{BT,K}. Such systems have attracted the attention of both mathematicians \cite{ LSV, young99} and physicists because of their importance in the study of intermittent transition to turbulence \cite{PM}. 

\bigskip 

In this work we provide a general framework to study differentiability of SRB measures for one dimensional non-uniformly expanding maps. We use this general framework to study linear response of maps with neutral fixed points. In particular, we apply our results to study linear response of Pomeau-Manneville type maps \cite{LSV, PM}. The difference between our result and those of \cite{BT, K} is two-fold: in \cite{BT,K} the authors obtain results only for probabilistic SRB measures. Moreover, they obtain a weak form of differentiability. While in our work, we cover both the finite and infinite SRB measure cases and we prove differentiability in norm\footnote{Theorem 1.2 of Korepanov \cite{K} implies differentiability in norm for the LSV map but only for probabilistic SRB measures. See the discussion on page 2 of \cite{K}. We would also like to stress here that Theorem 1.2 of \cite{K} uses the explicit formula of LSV maps and it does not cover the infinite SRB measure case.}. Moreover, we provide a linear response formula that covers both the finite and infinite SRB measure cases.

\bigskip

In Section \ref{setup} we introduce a general setup for the systems we study and we state our assumptions on this general setup. Section \ref{msection} includes the statement of our main results (Theorems~\ref{thm:A} and~\ref{thm:B}). Section \ref{proofs} contains the proof of the theorems  through several lemmas. In Section \ref{LSVmaps} we show that the assumptions of Section \ref{setup} are satisfied by the intermittent maps studied in \cite{LSV}.
\section{Setup and Assumptions}\label{setup}

\subsection{Interval maps with an inducing scheme}\label{ss:T}
We introduce now a class of (family of) interval maps which are non-uniformly expanding with two branches,
for which one can construct an inducing scheme which allow to inherit the linear response formula from 
the one for the induced system.

\begin{itemize}
\item 
Let $V$ be a neighbourhood of $0$. For any $\eps\in V$, $T_\eps\colon [0,1]\to[0,1]$ is a non-singular map, with respect to Lebesgue measure, $m$, with two onto branches $T_{0,\eps}\colon[0,1/2]\to[0,1]$ and $T_{1,\eps}\colon[1/2,1]\to[0,1]$. The inverse branches of  $T_{0,\eps}$, $T_{1,\eps}$ are respectively denoted by $g_{0,\eps}$ and $g_{1,\eps}$. We call $T_0:=T$ the unperturbed map, and $T_\eps$, for $\eps\not= 0$, the perturbed map.
\item We assume that for each $i=0,1$ and $j=0,1,2$ the following partial derivatives exist and satisfy the commutation relation
\begin{equation}\label{comm}
\partial_\eps g_{i,\eps}^{(j)} = (\partial_\eps g_{i,\eps})^{(j)}.
\end{equation}
\item We assume that $T_\eps$ has a unique absolutely continuous invariant measure\footnote{The $T_\eps$ absolutely continuous invariant measure is not assumed to be probabilistic; we allow for $T_\eps$ to admit a $\sigma$-finite absolutely continuous invariant measure.} (up to multiplication) whose Radom-Nykodim derivative will be denoted by $h_\eps$, and we denote for simplicity $h=h_0$.
\item Let $\hat T_\eps$, be the first return map of $T_\eps$ to $\Delta$, where $\Delta:= [1/2,1]$; i.e., for $x\in\Delta$
$$\hat T_{\eps}(x)=T_\eps^{R_\eps(x)}(x),$$
where 
$$R_\eps(x)=\inf\{n\ge 1:\, T^{n}_\eps(x)\in\Delta\}.$$
We assume that $\hat T_\eps$ has a unique acim (up to multiplication) with a continuous density denoted $\hat h_\eps\in C^0$.
\item Let $\Omega$  be the set of finite sequences of the form $\omega = 10^n$, for $n \in\mathbb N \cup\{0\}$. We set $g_{\omega,\eps}=g_{1,\eps}\circ g_{0,\eps}^{n}$. Then for $x\in [0,1]$ we have $T_{\eps}^{n+1}\circ g_{\omega,\eps}(x)=x$. The cylinder sets $[\omega]_{\eps} = g_{\omega, \eps}(\Delta)$, form a partition of $\Delta$ (mod $0$).
For $x\in [0,1]$, we assume
\begin{equation}\label{a4}
\sup_{\eps\in V}\sup_{x\in[0,1]}|g'_{\omega,\eps}(x)| <\infty ;
\end{equation}
\begin{equation}\label{a4.0}
\sup_{\eps\in V}\sup_{x\in[0,1]}|\partial_\eps g_{\omega,\eps}(x)| < \infty;
\end{equation}
\begin{equation}\label{a4'}
\sum_{\omega}\sup_{\eps\in V}||g'_{\omega,\eps}||_{\mathcal B} <\infty;
\end{equation}
and
\begin{equation}\label{a5}
\sum_\omega\sup_{\eps\in V}||\partial_\eps g_{\omega,\eps}'||_{\mathcal B}<\infty,
\end{equation}
\end{itemize}
where $\mathcal{B}$ denotes the set of continuous functions on $(0,1]$ with the norm $$\parallel f\parallel_{\mathcal{B}}=\sup\limits_{x\in(0,1]}|x^{\gamma}f(x)|,$$ for a fixed\footnote{In \eqref{a4'} and \eqref{a5} we need the assumptions to hold
only for a single $\gamma$.} $\gamma>0$. When equipped with the norm $\parallel \cdot\parallel_{\mathcal{B}}$, $\mathcal{B}$ is a Banach space. 

For $\Phi\in L^1$, let 
\begin{equation}\label{theF}
F_\eps(\Phi):=1_\Delta\Phi + (1-1_\Delta)\sum_{\omega\in\Omega}\Phi\circ g_{\omega,\eps}g_{\omega,\eps}'.
\end{equation}
Note that $F_\eps$ is a linear operator. In fact, for $x\in[0,1]\setminus \Delta$, the formula of $F_{\eps}$ can be re-written using the Perron-Frobenius operator of $T_\eps$: 
$$F_\eps(\Phi):=1_\Delta\Phi + (1-1_\Delta)\sum_{k\ge1}L^{k}_{\eps}(\Phi\cdot 1_{\{R_\eps>k\}}),$$
where $L_\eps$ is the Perron-Frobenius operator associated with $T_\eps$; i.e., for $\varphi\in L^{\infty}$ and $\psi\in L^1$
$$\int\varphi\circ T_\eps\cdot\psi dm=\int \varphi\cdot L_\eps\psi dm.$$
It is well known, see for instance \cite{BV}, that the densities of the original system and the induced one are related (modulo normalization in the finite measure case) by
\begin{equation}\label{eq:density}
h_\eps = F_\eps(\hat h_\eps).
\end{equation}

We also define the following operator, which will represent $\partial_\eps F_\eps\Phi|_{\eps=0}$ 
\begin{equation}\label{eq:Q}
Q\Phi = (1-1_\Delta) \sum_\omega \Phi'\circ g_\omega\cdot a_\omega g_\omega' + \Phi\circ g_\omega\cdot b_\omega,
\end{equation}
where $a_\omega=\partial_\eps g_{\omega,\eps}|_{\eps=0}$ and $b_\omega=\partial_\eps g_{\omega,\eps}'|_{\eps=0}$.

\subsection{Interval maps with countable number of branches}\label{ss:hatT}
We introduce here a class of (family of) interval maps which are uniformly expanding,
with a finite or countable number of branches, for which we will be able to 
prove a linear response formula. The induced map in Subsection \ref{ss:T} is a particular case of such uniformly expanding maps.\\

Let $\Delta$ be an interval and $V$ be a neighborhood of $0$. Let $\Omega$ be a finite or countable set.
We assume that the maps $\hat T_\eps\colon \Delta\to\Delta$ satisfy
\begin{itemize}
\item
For each $\eps\in V$, there exists a partition (mod 0) of $\Delta$ into open intervals $\Delta_{\omega,\eps}$, $\omega\in\Omega$ such that the restriction of $\hat T_\eps$ to $\Delta_{\omega,\eps}$ is piecewise $C^3$, onto and uniformly expanding in the sense that $\inf_\omega\inf_{\Delta_{\omega,\eps}}|\hat T_{\omega,\eps}'|>1$. We denote by $g_{\omega,\eps}$ the inverse branches of $\hat T_\eps$ on $\Delta_{\omega,\eps}$.
\item
We assume that for each $\omega\in\Omega$ and $j=0,1,2$ the following partial derivatives exist and satisfy the commutation relation\footnote{Note that \eqref{commB} is satisfied when $\hat T_\eps$
is an induced map as in Subsection~\ref{ss:T}. In particular, for each $i=0,1$ and $j=0,1,2$ the following partial derivatives exist and satisfy the commutation relation $\partial_\eps g_{i,\eps}^{(j)} = (\partial_\eps g_{i,\eps})^{(j)}$.}
\begin{equation}\label{commB}
\partial_\eps g_{\omega,\eps}^{(j)} = (\partial_\eps g_{\omega,\eps})^{(j)}.
\end{equation}
\item
We assume
\begin{equation}\label{a1}
\sum_{\omega}\sup_{\eps\in V}\sup_{x\in\Delta}|g'_{\omega, \eps}(x)|<\infty;
\end{equation}
and 
\begin{equation}\label{a2}
\sup_\omega\sup_{\eps\in V}\sup_{x\in\Delta} \left|\frac{g_{\omega,\eps}''(x)}{g_{\omega,\eps}'(x)}\right|<\infty;
\end{equation}
and for $i=1,2$
\begin{equation}\label{a3}
\sum_\omega\sup_{\eps\in V}\sup_{x\in\Delta}|\partial_\eps g_{\omega,\eps}^{i}(x)| <\infty. 
\end{equation}
\end{itemize}

Let $\hat L_\eps$ denote the Perron-Frobenius operator of the map $\hat T_\eps$; i.e., for  $\Phi\in L^1(\Delta)$ 
$$\hat L_\eps\Phi(x) :=\sum_{\omega\in\Omega}\Phi\circ g_{\omega,\eps}(x)g_{\omega,\eps}'(x)$$
for a.e. $x\in\Delta$. Under these conditions it is well known that $\hat T_\eps$ admits a unique (up to multiplication) invariant absolutely continuous finite measure. We denote its density by $\hat h_\eps$. Hence $\hat L_\eps\hat h_\eps=\hat h_\eps$. Moreover, $\hat L_{\eps}$ has a spectral gap when acting on $C^k$, $k=1,2$ (see for instance \cite{Li}). 
We denote the  Perron-Frobenius operator of the unperturbed map $\hat T$ by $\hat L$; i.e., $\hat L:=\hat L_0$ and let $\hat h:=\hat h_0$.   

\section{Statement of the main results}\label{msection}
\subsection{Statement of the main results}
A first general statement is that the differentiability of the $T_\eps$  absolutely continuous measure is inherited from that of the induced system.
\begin{theorem}\label{thm:A}
Let $T_\eps$ be a family of maps of the interval as described in Subsection~\ref{ss:T}.
If the density $\hat h_\eps$ of the induced map $\hat T_\eps$ is differentiable as a $C^0$ element, that is there exists $\hat h^*\in C^0$ such that 
\begin{equation}\label{eq:diffhat}
\lim_{\eps\to 0}||\frac{\hat h_{\eps}-\hat h}{\eps}-\hat h^*||_{C^0}=0,
\end{equation}
for some $\hat h\in C^0$, then 
\begin{enumerate}
\item there exists $h^*\in\B$ such that
$$\lim_{\eps\to 0}||\frac{h_{\eps}-h}{\eps}-h^*||_{\mathcal B}=0;$$
i.e., $h_\eps$ is differentiable as an element of $\mathcal B$ with respect to $\eps$;
\item in particular, if the conditions hold for some $\gamma<1$
$$\lim_{\eps\to 0}||\frac{h_{\eps}-h}{\eps}-h^*||_{1}=0.$$
\item The function $h^*$ is given by 
\footnote{Note that in the finite measure case, $h^*$ is the derivative of the non-normalized density $h_\eps$. The advantage in working with $h_{\eps}$ is reflected in keeping the operator $F_{\eps}$ linear and to accommodate the infinite measure preserving case. In the finite measure case, once the derivative of $h_{\eps}$ is obtained, the derivative of the normalized density can be easily computed. Indeed, $h_\eps=h +\eps h^*+o(\eps)$. Consequently, $\int h_\eps=\int h +\eps \int h^*+o(\eps)$. Hence, $\partial_{\eps}(\frac{h_\eps}{\int h_\eps}){|}_{\eps=0}=h^*-h\int h^*$.}
\[
h^* = F_0( \hat h^* ) + Q\hat h.
\]
\end{enumerate}
\end{theorem}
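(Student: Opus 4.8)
The plan is to reduce everything to the identity $h_\eps = F_\eps(\hat h_\eps)$ from \eqref{eq:density} and expand both factors to first order in $\eps$. Writing $\hat h_\eps = \hat h + \eps \hat h^* + o(\eps)$ (in $C^0$, by hypothesis \eqref{eq:diffhat}) and, formally, $F_\eps = F_0 + \eps \,\partial_\eps F_\eps|_{\eps=0} + o(\eps) = F_0 + \eps Q + o(\eps)$, one gets
\begin{equation*}
h_\eps = F_\eps(\hat h + \eps\hat h^* + o(\eps)) = F_0(\hat h) + \eps\bigl(F_0(\hat h^*) + Q\hat h\bigr) + o(\eps),
\end{equation*}
so that $h^* = F_0(\hat h^*) + Q\hat h$, which is exactly formula (c). The content of the proof is making this expansion rigorous in the $\B$-norm, i.e. controlling all the error terms uniformly over the (infinitely many) branches $\omega\in\Omega$.

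First I would decompose the difference quotient as
\begin{equation*}
\frac{h_\eps - h}{\eps} - h^* = \underbrace{\frac{F_\eps(\hat h_\eps) - F_\eps(\hat h)}{\eps} - Q\hat h}_{\text{(I)}} + \underbrace{F_\eps\Bigl(\frac{\hat h_\eps - \hat h}{\eps} - \hat h^*\Bigr) - \Bigl(F_\eps(\hat h^*) - F_0(\hat h^*)\Bigr)}_{\text{needs care}} + \underbrace{\bigl(F_\eps - F_0\bigr)(\hat h^*)}_{\text{(III)}},
\end{equation*}
or some regrouping of this flavour; the cleanest is probably to write $h_\eps - h = F_\eps(\hat h_\eps) - F_\eps(\hat h) + (F_\eps - F_0)(\hat h)$ and handle the two pieces separately. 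The term $(F_\eps - F_0)(\hat h)$ divided by $\eps$ must converge to $Q\hat h$ in $\B$: this is a termwise mean-value-theorem estimate on $\Phi\circ g_{\omega,\eps} g'_{\omega,\eps}$ as a function of $\eps$, using the commutation relation \eqref{comm} to identify the $\eps$-derivative with the expression appearing in \eqref{eq:Q}, and summability \eqref{a4'}, \eqref{a5} (and \eqref{a4.0}) to dominate the tail of the $\omega$-sum and invoke dominated convergence. The term $F_\eps\bigl(\frac{\hat h_\eps - \hat h}{\eps}\bigr)$ should be shown to converge to $F_0(\hat h^*)$: since $\frac{\hat h_\eps - \hat h}{\eps}\to\hat h^*$ in $C^0\subset\B$, this needs a uniform bound $\|F_\eps\Phi\|_\B \le C\|\Phi\|_{C^0}$ (or $\le C\|\Phi\|_\B$) independent of $\eps\in V$, which again comes from \eqref{a4}, \eqref{a4'} together with the definition of the $\B$-norm, plus continuity $F_\eps\Phi \to F_0\Phi$.

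The key technical point — and the main obstacle — is the interplay between the weight $x^\gamma$ in $\|\cdot\|_\B$ and the fact that $F_\eps$ mixes in a factor $1-1_\Delta$ supported near the neutral fixed point $0$, where the densities blow up. One has to check that each operator ($F_\eps$, $Q$, and the remainder operators) maps $C^0$ (or $\B$) boundedly into $\B$ with the single fixed $\gamma>0$ from the hypotheses, using $\sup_x |x^\gamma \sum_\omega \Phi\circ g_{\omega,\eps}(x) g'_{\omega,\eps}(x)|$ and bounding it by $\|\Phi\|_{C^0}\sup_x x^\gamma\sum_\omega |g'_{\omega,\eps}(x)|$, which needs \eqref{a4'}; for $Q$ one also needs to control $\Phi'\circ g_\omega$, so one should first reduce to $\Phi\in C^1$ dense in $C^0$ — but $\hat h$ and $\hat h^*$ are only assumed $C^0$, so in fact the $\Phi'\circ g_\omega$ term in $Q$ must be re-expressed: note $\sum_\omega (\Phi\circ g_{\omega,\eps})' a_\omega = \sum_\omega \Phi'\circ g_{\omega,\eps} g'_{\omega,\eps} a_\omega$, but the genuinely available object is $\partial_\eps(\Phi\circ g_{\omega,\eps}) = \Phi'\circ g_{\omega,\eps}\,\partial_\eps g_{\omega,\eps}$, so one should package $Q$ by differentiating $\Phi\circ g_{\omega,\eps} g'_{\omega,\eps}$ in $\eps$ as a whole and never differentiate $\Phi$ alone. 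After this is arranged, statements (a) and (c) follow from the expansion above, and statement (b) is immediate: if $\gamma<1$ then $x^{-\gamma}\in L^1(0,1]$, so $\|f\|_1 = \int |f| \le \|f\|_\B \int_0^1 x^{-\gamma}\,dx < \infty$, giving $\|\cdot\|_1 \le C\|\cdot\|_\B$ and hence convergence in $L^1$ from convergence in $\B$.
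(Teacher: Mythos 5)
Your proposal is correct and follows essentially the same route as the paper: it decomposes $h_\eps-h$ via $h_\eps=F_\eps(\hat h_\eps)$ into the piece $F_\eps\bigl(\frac{\hat h_\eps-\hat h}{\eps}\bigr)\to F_0(\hat h^*)$ (handled by the uniform bound $\|F_\eps\Phi\|_\B\le C\|\Phi\|_{C^0}$ from \eqref{a4'} plus continuity of $\eps\mapsto F_\eps(\hat h^*)$, which is the paper's Lemma~\ref{3rdstep}) and the piece $(F_\eps-F_0)(\hat h)/\eps\to Q\hat h$ (termwise differentiation with the commutation relation and summability \eqref{a4.0}, \eqref{a4'}, \eqref{a5}, which is Lemma~\ref{4thstep}), with part (b) following from $\|f\|_1\le\|f\|_\B\int_0^1 x^{-\gamma}\,dx$ when $\gamma<1$. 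Your remark that $Q$ should only ever act on the $C^1$ density $\hat h$ and never on the merely-$C^0$ function $\hat h^*$ matches the paper's arrangement exactly.
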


Next, we show that for the family of maps with countable number of branches introduced in Subsection~\ref{ss:hatT} the invariant density is differentiable as an element of $C^0$.

\begin{theorem}\label{thm:B}
Let $\hat T_\eps\colon \Delta\to\Delta$ be a family of maps of the interval as described in Subsection~\ref{ss:hatT}.
Then the density $\hat h_\eps$ of the map $\hat T_\eps$ is differentiable as a $C^0$ element, that is there exists $\hat h^*\in C^0$ such that~\eqref{eq:diffhat} holds.
Moreover, we have the linear response formula
$$\hat h^*:=(I-\hat L)^{-1}\hat L[A_0\hat h'+B_0\hat h],$$
where $\hat h'$ is the spatial derivative of $\hat h$ and
$$A_0=-\left(\frac{\partial_{\eps}\hat T_{\eps}}{\hat T'_{\eps}}\right){\Big{|}}_{\eps=0},\hskip 0.5cm B_0= \left(\frac{\partial_{\eps}\hat T_{\eps}\cdot \hat T_{\eps}''}{\hat T_\eps'^2}-\frac{\partial_{\eps}\hat T_{\eps}'}{\hat T_\eps'}\right){\Big{|}}_{\eps=0}.$$ 
\end{theorem}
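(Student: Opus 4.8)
The plan is to run the resolvent/Lasota--Yorke scheme for linear response, with the extra care needed because convergence is sought in the \emph{weak} norm $C^0$ while the spectral gap is only available on $C^1$ and $C^2$. Fix the normalisation $\int_\Delta\hat h_\eps\,dm=1$, so that $\hat h_\eps-\hat h$, and more generally every function $(\hat L_\eps-\hat L)\Phi$, lies in the hyperplane $\mathcal N=\{f:\int_\Delta f\,dm=0\}$, which is invariant under $\hat L_\eps$ and under $\hat L$. Assumptions~\eqref{a1}--\eqref{a2} are exactly what makes the Lasota--Yorke inequality of~\cite{Li} hold for the family $\{\hat L_\eps\}_{\eps\in V}$ on $C^1$ and on $C^2$ with constants uniform in $\eps$; combined with the assumed spectral gap this yields $\sup_{\eps\in V}\|\hat h_\eps\|_{C^2}<\infty$ and, for $j=1,2$, a bound $\|\hat L^n|_{\mathcal N}\|_{C^j\to C^j}\le C\theta^n$ with $\theta\in(0,1)$, so that $(I-\hat L)$ is boundedly invertible on $\mathcal N\cap C^j$.

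Next I would establish the operator perturbation bounds
\begin{equation}\label{eq:plan-opest}
\|(\hat L_\eps-\hat L)\Phi\|_{C^j}\le C|\eps|\,\|\Phi\|_{C^{j+1}},\qquad j=0,1,
\end{equation}
uniformly in $\eps\in V$, using the mean value theorem in $\eps$, the commutation relation~\eqref{commB}, and the summability assumptions~\eqref{a1}--\eqref{a3}. Combining the case $j=1$ with the resolvent identity $(I-\hat L)(\hat h_\eps-\hat h)=\hat L_\eps\hat h_\eps-\hat L\hat h_\eps=(\hat L_\eps-\hat L)\hat h_\eps$ and the uniform $C^2$-bound on $\hat h_\eps$ gives the a priori estimate $\|\hat h_\eps-\hat h\|_{C^1}\le C'|\eps|$; in particular $\hat h_\eps\to\hat h$ in $C^1$.

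To differentiate, divide the resolvent identity by $\eps$ and split
$$\frac{(\hat L_\eps-\hat L)\hat h_\eps}{\eps}=\frac{(\hat L_\eps-\hat L)\hat h}{\eps}+\frac{(\hat L_\eps-\hat L)(\hat h_\eps-\hat h)}{\eps}.$$
By~\eqref{eq:plan-opest} with $j=0$ and the Lipschitz bound just obtained, the last term is $O(\|\hat h_\eps-\hat h\|_{C^1})=O(\eps)$ in $C^0$. For the first term I would differentiate the series $\hat L_\eps\hat h=\sum_\omega(\hat h\circ g_{\omega,\eps})\,g'_{\omega,\eps}$ in $\eps$ term by term; using~\eqref{commB} together with the identities $\hat T_\eps\circ g_{\omega,\eps}=\mathrm{id}$ and $g'_{\omega,\eps}=1/(\hat T'_\eps\circ g_{\omega,\eps})$ one computes $\partial_\eps g_{\omega,\eps}|_{\eps=0}=A_0\circ g_\omega$ and $\partial_\eps g'_{\omega,\eps}|_{\eps=0}=(B_0\circ g_\omega)\,g'_\omega$, whence
$$\partial_\eps(\hat L_\eps\hat h)\big|_{\eps=0}=\sum_\omega\big[(A_0\hat h')\circ g_\omega+(B_0\hat h)\circ g_\omega\big]g'_\omega=\hat L[A_0\hat h'+B_0\hat h].$$
The summability assumptions~\eqref{a1}--\eqref{a3}, applied to the fixed function $\hat h\in C^2$, furnish a dominating series, which justifies interchanging $\lim_{\eps\to0}$ with $\sum_\omega$ and shows $\eps^{-1}(\hat L_\eps-\hat L)\hat h\to\hat L[A_0\hat h'+B_0\hat h]$ in $C^0$. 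Because $\hat h\in C^2$ and the maps are $C^3$, the function $A_0\hat h'+B_0\hat h$ is bounded and piecewise $C^1$, so $\hat L[A_0\hat h'+B_0\hat h]\in C^1$ by the branch structure; hence $\hat h^*:=(I-\hat L)^{-1}\hat L[A_0\hat h'+B_0\hat h]$ is well defined in $\mathcal N\cap C^1\subset C^0$.

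The only non-routine point is transporting the $C^0$-convergence through $(I-\hat L)^{-1}$, which is bounded on $C^1$ and $C^2$ but not on $C^0$. I would do this by interpolation on the Neumann series. Put $f_\eps:=\eps^{-1}(\hat L_\eps-\hat L)\hat h_\eps-\hat L[A_0\hat h'+B_0\hat h]$, so that $(I-\hat L)^{-1}f_\eps=\eps^{-1}(\hat h_\eps-\hat h)-\hat h^*$; by the previous two paragraphs $f_\eps\in\mathcal N$, $\sup_{\eps}\|f_\eps\|_{C^1}<\infty$, and $\|f_\eps\|_{C^0}\to0$. Writing $(I-\hat L)^{-1}f_\eps=\sum_{n\ge0}\hat L^nf_\eps$, for each $N$ the tail is controlled in the strong norm, $\sum_{n\ge N}\|\hat L^nf_\eps\|_{C^0}\le\sum_{n\ge N}\|\hat L^nf_\eps\|_{C^1}\le\frac{C\theta^N}{1-\theta}\sup_{\eps}\|f_\eps\|_{C^1}$, while the head satisfies $\sum_{n<N}\|\hat L^nf_\eps\|_{C^0}\le N\big(\sup_n\|\hat L^n\|_{C^0\to C^0}\big)\|f_\eps\|_{C^0}$, the supremum being finite because $\hat L$ is positive and $\hat L^n 1$ converges in $C^1$. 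Choosing $N$ large enough to make the tail small uniformly in $\eps$, and then $\eps$ small, gives $(I-\hat L)^{-1}f_\eps\to0$ in $C^0$, i.e.\ $\|\eps^{-1}(\hat h_\eps-\hat h)-\hat h^*\|_{C^0}\to0$, which is~\eqref{eq:diffhat} with the stated formula. The main obstacle is thus precisely this weak-/strong-norm bookkeeping, together with making the term-by-term $\eps$-differentiation of the countable sum over $\omega$ uniform near $\eps=0$, which is where~\eqref{a3} and~\eqref{commB} enter decisively.
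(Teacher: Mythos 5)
Your proposal is correct in strategy and would go through, but it takes a genuinely different route from the paper. The paper starts from the identity $(I-\hat L_\eps)(\hat h_\eps-\hat h)=(\hat L_\eps-\hat L)\hat h$, i.e.\ it applies the \emph{perturbed} resolvent $G_\eps=(I-\hat L_\eps)^{-1}$ to the operator difference acting on the \emph{fixed} density $\hat h$. It then needs only two ingredients: differentiability of $\eps\mapsto\hat L_\eps\hat h$ in $C^1$ (Lemma~\ref{diff-lemma}, giving the limit $q=\hat L[A_0\hat h'+B_0\hat h]$), and the facts that $G_\eps$ is uniformly bounded from $C^1_0$ to $C^0$ and that $G_\eps q\to G_0 q$ in $C^0$ via the second resolvent identity (Lemma~\ref{lem:G}). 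You instead use $(I-\hat L)(\hat h_\eps-\hat h)=(\hat L_\eps-\hat L)\hat h_\eps$, with the unperturbed resolvent but the perturbed density on the right, which forces you to (a) prove an a priori Lipschitz bound $\|\hat h_\eps-\hat h\|_{C^1}\le C|\eps|$, (b) control $\sup_\eps\|\hat h_\eps\|_{C^2}$, and (c) push a $C^0$-convergent, $C^1$-bounded family through $(I-\hat L)^{-1}$ by your head/tail splitting of the Neumann series. Step (c) is a nice interpolation argument and is sound; the paper sidesteps it entirely because in its decomposition the function fed to the resolvent, $(\hat L_\eps-\hat L)\hat h/\eps$, already converges in the strong norm $C^1$, so only uniform boundedness of $G_\eps$ from $C^1_0$ to $C^0$ is needed.

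One caveat you should address: your route leans on $\sup_{\eps\in V}\|\hat h_\eps\|_{C^2}<\infty$, which you justify by a uniform Lasota--Yorke inequality on $C^2$. The listed quantitative hypotheses \eqref{a1}--\eqref{a3} control only $g''_{\omega,\eps}/g'_{\omega,\eps}$, not $g'''_{\omega,\eps}/g'_{\omega,\eps}$, so a uniform-in-$\eps$ $C^2$ bound does not follow literally from the stated assumptions (the setup asserts a spectral gap on $C^2$ for each $\eps$, but not uniformity of the $C^2$ constants). The paper's decomposition needs $C^2$ regularity only of the single unperturbed density $\hat h$ (through the coefficient $a_0^{(1)}$ in Lemma~\ref{diff-lemma}), which is why it can get away with the uniform spectral gap on $C^1_0$ alone. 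You could repair this either by adding the missing distortion hypothesis on third derivatives, or by replacing your a priori estimate with the paper's identity $(I-\hat L_\eps)(\hat h_\eps-\hat h)=(\hat L_\eps-\hat L)\hat h$, which yields $\|\hat h_\eps-\hat h\|_{C^1}\le C|\eps|\,\|\hat h\|_{C^2}$ and removes the need for uniform $C^2$ bounds on the family.
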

\begin{corollary}
If $T_\eps$ satisfies the assumptions of Subsections~\ref{ss:T} and \ref{ss:hatT}, then
\begin{equation}\label{eq:lrf}
h^* = F_0 (I-\hat L)^{-1} \hat L (A_0\hat h'+B_0\hat H) + Q\hat h.
\end{equation}
\end{corollary}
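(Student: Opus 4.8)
The plan is to combine the two theorems directly. Theorem~\ref{thm:B} tells us that, under the assumptions of Subsection~\ref{ss:hatT}, the induced density $\hat h_\eps$ is differentiable as a $C^0$ element and that its derivative is
$$
\hat h^* = (I-\hat L)^{-1}\hat L[A_0\hat h' + B_0\hat h].
$$
In particular, the hypothesis \eqref{eq:diffhat} of Theorem~\ref{thm:A} is satisfied. So I would first invoke Theorem~\ref{thm:B} to produce $\hat h^*\in C^0$ with the stated formula, then feed this into Theorem~\ref{thm:A}.

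Next, Theorem~\ref{thm:A}(3) gives $h^* = F_0(\hat h^*) + Q\hat h$. Substituting the expression for $\hat h^*$ from Theorem~\ref{thm:B} and using linearity of $F_0$ (noted right after \eqref{theF}), we obtain
$$
h^* = F_0\bigl((I-\hat L)^{-1}\hat L[A_0\hat h' + B_0\hat h]\bigr) + Q\hat h,
$$
which is exactly \eqref{eq:lrf} (modulo the harmless typographical discrepancy that \eqref{eq:lrf} writes $\hat H$ where $\hat h$ is meant). Parts (1) and (2) of Theorem~\ref{thm:A} simultaneously give that $h_\eps$ is differentiable in the $\B$-norm, and in the $L^1$-norm when $\gamma<1$, so the corollary statement is fully justified.

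The only genuine content here is checking that the hypotheses match up: one must verify that the assumptions \eqref{a1}--\eqref{a3} of Subsection~\ref{ss:hatT}, together with the smoothness/commutation assumption \eqref{commB}, indeed imply the differentiability hypothesis \eqref{eq:diffhat} used by Theorem~\ref{thm:A} — but this is precisely the conclusion of Theorem~\ref{thm:B}, so nothing new is required. Since the corollary hypothesizes \emph{both} sets of assumptions, there is no compatibility issue to resolve: the induced map of a $T_\eps$ satisfying Subsection~\ref{ss:T} is automatically of the form treated in Subsection~\ref{ss:hatT} when the Subsection~\ref{ss:hatT} assumptions also hold, and the footnote to \eqref{commB} already records that the commutation relations transfer. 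Thus the corollary is an immediate consequence of chaining the two theorems, and there is no real obstacle — the proof is essentially one line of substitution. If anything, the one point deserving a sentence of care is the appeal to linearity of $F_0$ to pull the operator $(I-\hat L)^{-1}\hat L$ outside, together with the observation that $(I-\hat L)^{-1}\hat L[A_0\hat h'+B_0\hat h]\in C^0$ so that $F_0$ is applied to a legitimate argument, exactly as required in the statement of Theorem~\ref{thm:A}.
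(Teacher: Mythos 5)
Your proof is correct and is exactly the paper's argument: the authors also simply chain Theorem~\ref{thm:B} (which supplies $\hat h^*$ and verifies hypothesis~\eqref{eq:diffhat}) into Theorem~\ref{thm:A}(3) and substitute. You spell out the substitution and the hypothesis-matching more explicitly than the paper's one-line proof, but there is no difference in substance.
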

\begin{proof}
The proof follows from Theorems \ref{thm:A} and \ref{thm:B}. 
\end{proof}
\begin{remark}[Moving inducing sets]
We notice that Theorem~\ref{thm:A} generalizes easily to the case where the inducing sets $\Delta_\eps$ are allowed to depend on $\eps$ in a $C^1$ way. Indeed, any $C^1$ family of $C^1$ diffeomorphism $S_\eps\colon [0,1]\to[0,1]$ such that $S_\eps(\Delta_\eps)=\Delta$, $S_0=id$, will conjugate $T_\eps$ with a map $\bar T_\eps$ whose inducing set is $\Delta$. Applying Theorem \ref{thm:A} to the map $\bar T_\eps$, with the obvious notation, we obtain: 
\begin{equation}\label{moving}
\bar h_\eps=\bar h+\eps \bar h^*+o(\eps)
\end{equation} 
Then using \eqref{moving} and the fact that $h_\eps = \bar h_\eps\circ S_\eps\cdot S_\eps'$ we obtain 
\begin{equation}
\partial_\eps h_\eps|_{\eps=0} =  \bar h'\cdot\partial_\eps S_\eps|_{\eps=0}+\bar h^*+\bar h\cdot\partial_\eps S_\eps'|_{\eps=0}.
\end{equation}
\end{remark}

\subsection{Rigorous numerical approximation of the derivative}\label{compare}
An important feature of our approach is that it could be amenable to obtain rigorous numerical approximation of $h^*$. In particular, since $\hat L$ has a spectral gap on $C^k$, $k=1,2$, using ideas of \cite{BGNN} one can approximate $(I-\hat L)^{-1}\hat L[A_0\hat h'+B_0\hat h]$ as a first step, and in the second step one can follow the path of \cite{BBD} and pull back the computed formula of the first step to the full system and obtain a numerical approximation of $h^*$ in $\mathcal B$.

\section{Proof of the results}\label{proofs}
We use the letter $C$ to denote positive constants whose values may change when estimating various expressions but are independent of both $\eps$ and $\omega$ (or $n$). In the following, we first present in Subsection \ref{pthb} the proof of Theorem \ref{thm:B}, and then in Subsection \ref{ptha} we present the proof of Theorem \ref{thm:A}. 
\subsection{Proof of Theorem~\ref{thm:B}} \label{pthb}
We first prove a  lemma that will be used in the linear response formula in Theorem \ref{thm:B}. 
\begin{lemma}\label{lem:3b}
For any differentiable function $\Phi\colon\Delta\to\RR$, the function $\Phi\circ g_{\omega,\eps} g_{\omega,\eps}'$ is differentiable with respect to $\eps$
and we have on $\Delta$ 
\begin{equation}\label{eq:2}
\partial_\eps ( \Phi\circ  g_{\omega,\eps} g_{\omega,\eps}') = [ \Phi' A_\eps + \Phi B_\eps]\circ g_{\omega,\eps} g_{\omega,\eps}',
\end{equation}
where
\[
A_\eps=-\left(\frac{\partial_{\eps}\hat T_{\eps}}{\hat T'_{\eps}}\right),\hskip 0.5cm 
B_\eps= \left(\frac{\partial_{\eps}\hat T_{\eps}\cdot \hat T_{\eps}''}{\hat T_\eps'^2}-\frac{\partial_{\eps}\hat T_{\eps}'}{\hat T_\eps'}\right).
\]
\end{lemma}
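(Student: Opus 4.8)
The plan is to obtain \eqref{eq:2} by differentiating the defining relation of the inverse branches, namely the identity $\hat T_\eps\circ g_{\omega,\eps}=\mathrm{id}$ on $\Delta$, which holds for every $\eps\in V$ because $g_{\omega,\eps}$ is the inverse of the onto branch $\hat T_\eps|_{\Delta_{\omega,\eps}}$. The existence of $\partial_\eps g_{\omega,\eps}^{(j)}$ for $j=0,1,2$ together with the commutation relation \eqref{commB} guarantee that $\eps\mapsto \Phi\circ g_{\omega,\eps}\,g_{\omega,\eps}'$ is differentiable whenever $\Phi$ is; so the real content of the lemma is computing this derivative explicitly.

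First I would differentiate $\hat T_\eps(g_{\omega,\eps}(x))=x$ with respect to $\eps$. The left-hand side depends on $\eps$ both through the explicit parameter in $\hat T_\eps$ and through the argument $g_{\omega,\eps}$, so the chain rule gives
\begin{equation*}
(\partial_\eps\hat T_\eps)\circ g_{\omega,\eps}+(\hat T_\eps'\circ g_{\omega,\eps})\,\partial_\eps g_{\omega,\eps}=0,
\end{equation*}
whence $\partial_\eps g_{\omega,\eps}=A_\eps\circ g_{\omega,\eps}$. Differentiating the same identity in $x$ yields $(\hat T_\eps'\circ g_{\omega,\eps})\,g_{\omega,\eps}'=1$, i.e. $g_{\omega,\eps}'=1/(\hat T_\eps'\circ g_{\omega,\eps})$. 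Differentiating this last expression in $\eps$, using $\partial_\eps g_{\omega,\eps}=A_\eps\circ g_{\omega,\eps}$ and the chain rule on $\hat T_\eps'\circ g_{\omega,\eps}$, a short quotient-rule computation produces exactly $\partial_\eps g_{\omega,\eps}'=(B_\eps\circ g_{\omega,\eps})\,g_{\omega,\eps}'$ with $B_\eps$ as stated. Equivalently, one may write $\partial_\eps g_{\omega,\eps}'=(\partial_\eps g_{\omega,\eps})'=(A_\eps'\circ g_{\omega,\eps})\,g_{\omega,\eps}'$ via \eqref{commB} and then check the clean identity $A_\eps'=B_\eps$, the latter following from interchanging the $\eps$- and $x$-derivatives of $\hat T_\eps$.

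With these two identities in hand the conclusion is a product-rule computation. Writing
\begin{equation*}
\partial_\eps(\Phi\circ g_{\omega,\eps}\,g_{\omega,\eps}')=\partial_\eps(\Phi\circ g_{\omega,\eps})\,g_{\omega,\eps}'+(\Phi\circ g_{\omega,\eps})\,\partial_\eps g_{\omega,\eps}',
\end{equation*}
the chain rule turns the first term into $(\Phi'\circ g_{\omega,\eps})(A_\eps\circ g_{\omega,\eps})\,g_{\omega,\eps}'$ and the second into $(\Phi\circ g_{\omega,\eps})(B_\eps\circ g_{\omega,\eps})\,g_{\omega,\eps}'$; factoring out $g_{\omega,\eps}'$ and combining the compositions gives $[\Phi'A_\eps+\Phi B_\eps]\circ g_{\omega,\eps}\,g_{\omega,\eps}'$, which is \eqref{eq:2}.

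The computation is routine and I do not expect a genuine obstacle; the only points needing care are the correct accounting of the two sources of $\eps$-dependence in $\hat T_\eps\circ g_{\omega,\eps}$ and the appeal to \eqref{commB} to legitimise differentiating $\Phi\circ g_{\omega,\eps}\,g_{\omega,\eps}'$ termwise. The uniform-expansion hypothesis $\inf|\hat T_\eps'|>1$ ensures that the denominators in $A_\eps$ and $B_\eps$ never vanish, so no regularity issue arises in this pointwise identity.
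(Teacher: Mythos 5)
Your argument is correct and follows essentially the same route as the paper: differentiate $\hat T_\eps\circ g_{\omega,\eps}=\mathrm{id}$ in $\eps$ to get $\partial_\eps g_{\omega,\eps}=A_\eps\circ g_{\omega,\eps}$, deduce $\partial_\eps g_{\omega,\eps}'=(A_\eps'\circ g_{\omega,\eps})\,g_{\omega,\eps}'=(B_\eps\circ g_{\omega,\eps})\,g_{\omega,\eps}'$ via the commutation relation, and finish with the product and chain rules. The paper's proof is exactly this computation (it states $A_\eps'=B_\eps$ implicitly in one line), so no substantive difference.
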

\begin{proof}
We start from the relation $\hat T_\eps \circ g_{\omega,\eps}(x)=x$ and differentiate it  with respect to $\eps$ and get $\hat T_\eps'\circ g_{\omega,\eps} \partial_\eps g_{\omega,\eps} + \partial_\eps \hat T_\eps \circ g_{\omega,\eps}=0$. This gives $\partial_\eps g_{\omega,\eps} = A_\eps\circ g_{\omega,\eps}$.
This also implies that $\partial_\eps g_{\omega,\eps}' = A_\eps' \circ g_{\omega,\eps} g_{\omega,\eps}' = B_\eps\circ g_{\omega,\eps} g_{\omega,\eps}'$. The conclusion follows from the following differentiation with respect to  $\eps$:
\begin{equation}\label{diffeps}
\begin{split}
\partial_\eps (\Phi\circ g_{\omega,\eps}g_{\omega,\eps}')
&=
\partial_\eps(\Phi\circ g_{\omega,\eps})g_{\omega,\eps}' + \Phi\circ g_{\omega,\eps} \partial_\eps g_{\omega,\eps}'\\
&=
\Phi'\circ g_{\omega,\eps} \partial_\eps g_{\omega,\eps}g_{\omega,\eps}' + \Phi\circ g_{\omega,\eps} \partial_\eps g_{\omega,\eps}'.
\end{split}
\end{equation}
\end{proof}
\noindent{\bf Strategy of the proof of Theorem \ref{thm:B}.} The general strategy starts from the identity

\begin{lemma}\label{lem:1}
We have $\hat h_\eps = (I-\hat L_\eps)^{-1}(\hat L_\eps-\hat L)\hat h + \hat h$.
\end{lemma}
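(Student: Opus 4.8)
The plan is to reduce the lemma to the two fixed-point relations $\hat L_\eps\hat h_\eps=\hat h_\eps$ and $\hat L\hat h=\hat h$, together with an inversion of $I-\hat L_\eps$ on a suitable subspace. First I would expand $(I-\hat L_\eps)(\hat h_\eps-\hat h)$: the invariance of $\hat h_\eps$ makes the $\hat h_\eps$-terms cancel, and rewriting $\hat h=\hat L\hat h$ in the remaining term yields the identity
\[
(I-\hat L_\eps)(\hat h_\eps-\hat h)=\hat L_\eps\hat h-\hat h=(\hat L_\eps-\hat L)\hat h .
\]
The whole content of the statement is therefore the legitimacy of applying $(I-\hat L_\eps)^{-1}$ to this relation.

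The step I expect to be the main obstacle is precisely this inversion, because $I-\hat L_\eps$ is not invertible on the full space: $1$ is an eigenvalue of $\hat L_\eps$ with eigenfunction $\hat h_\eps$. To handle it I would invoke the spectral gap of $\hat L_\eps$ on $C^k$ recorded in Subsection~\ref{ss:hatT}. This decomposes the space into the leading eigenline $\RR\hat h_\eps$ and a complementary $\hat L_\eps$-invariant subspace on which the spectral radius of $\hat L_\eps$ is strictly smaller than $1$; on that complement $(I-\hat L_\eps)^{-1}=\sum_{n\ge0}\hat L_\eps^{n}$ converges and is well defined, so that $(I-\hat L_\eps)^{-1}$ is read as the reduced resolvent.

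It then remains to check that both sides of the displayed identity lie in that complement, so that $(I-\hat L_\eps)^{-1}$ may be applied unambiguously. Normalizing the densities so that $\int_\Delta\hat h_\eps\,dm=\int_\Delta\hat h\,dm=1$ and using that the Perron--Frobenius operator preserves the Lebesgue integral, the leading spectral projection is $\Phi\mapsto(\int_\Delta\Phi\,dm)\,\hat h_\eps$; since $\hat h_\eps-\hat h$ and $(\hat L_\eps-\hat L)\hat h$ both have zero integral, each is annihilated by this projection and hence lies in the complementary subspace. Applying $(I-\hat L_\eps)^{-1}$ to the identity and adding $\hat h$ to both sides then gives $\hat h_\eps=(I-\hat L_\eps)^{-1}(\hat L_\eps-\hat L)\hat h+\hat h$, as claimed.
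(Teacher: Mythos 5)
Your proposal is correct and follows essentially the same route as the paper: establish the algebraic identity $(I-\hat L_\eps)(\hat h_\eps-\hat h)=(\hat L_\eps-\hat L)\hat h$ from the fixed-point relations, then use the spectral gap of $\hat L_\eps$ to invert $I-\hat L_\eps$ on the zero-average subspace, in which both sides lie. Your explicit check (via normalization and integral preservation) that $\hat h_\eps-\hat h$ itself has zero average is a point the paper leaves implicit, but the argument is the same.
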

\begin{proof}
One easily checks that 
 \[
(I-\hat L_\eps)( \hat h_\eps-\hat h) = (\hat L_\eps-\hat L) \hat h.
 \]
Since $\hat L_\eps$ has a spectral gap on $C^1$ it eventually contracts exponentially
 on the subset of zero average functions $C_0^1$. Since the ranges of 
 $(\hat L_\eps-\hat L_0)$ and $(I-\hat L_\eps)$ are contained in $C_0^1$, the composition below is well defined
 $$(I-\hat L_\eps)^{-1}(I-\hat L_\eps)( \hat h_\eps-\hat h) = (I-\hat L_\eps)^{-1}(\hat L_\eps-\hat L) \hat h.$$
 This completes the proof of the lemma.
 \end{proof}

Setting $H_\eps = \hat L_\eps -\hat L$ and $G_\eps=(I-\hat L_\eps)^{-1}$, Lemma~\ref{lem:1} reads 
\begin{equation}\label{eq:GH}
\hat h_\eps = G_\eps H_\eps \hat h +\hat h.
\end{equation}
We then obtain, using Lemma~\ref{diff-lemma} below, the following first order expansion in $C_0^1$
\[
H_\eps \hat h = \eps q + o(\eps).
\]
We then show, see second statement of Lemma~\ref{lem:G} below, that $G_\eps$ is uniformly bounded in $\L(C_0^1,C^0)$ to obtain the following expansion in $C^0$
\[
G_\eps H_\eps\hat h = \eps G_\eps q + o(\eps).
\]
Finally, using the two expansions above with \eqref{eq:GH} and showing that $G_\eps(q)\to G_0(q)$ in $C^0$, see the first statement of Lemma~\ref{lem:G} below, we obtain in $C^0$
\[
\hat h_\eps = \hat h + \eps G_0(q) + o(\eps),
\]
which proves the theorem.
\begin{lemma}\label{diff-lemma}
We have 
\[\frac{H_\eps \hat h}{\eps} \to q \text{ in } C_0^1,
\]
where $q=\hat L [A_0\hat h' + B_0 \hat h]$.
\end{lemma}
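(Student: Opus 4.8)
The plan is to establish the first-order expansion $H_\eps \hat h = \eps q + o(\eps)$ in $C_0^1$ by writing $H_\eps \hat h = \hat L_\eps \hat h - \hat L \hat h$ and using that $\hat L \hat h = \hat h$, so that $H_\eps \hat h = \hat L_\eps \hat h - \hat h$. Expanding the Perron--Frobenius operator branch by branch, $\hat L_\eps \hat h = \sum_{\omega} \hat h \circ g_{\omega,\eps}\, g_{\omega,\eps}'$, the natural move is to differentiate each summand in $\eps$ and integrate. By Lemma~\ref{lem:3b} we have $\partial_\eps(\hat h \circ g_{\omega,\eps}\, g_{\omega,\eps}') = [\hat h' A_\eps + \hat h B_\eps]\circ g_{\omega,\eps}\, g_{\omega,\eps}'$, so by the fundamental theorem of calculus
\[
H_\eps \hat h = \sum_\omega \big( \hat h\circ g_{\omega,\eps} g_{\omega,\eps}' - \hat h\circ g_{\omega,0} g_{\omega,0}'\big) = \int_0^\eps \sum_\omega [\hat h' A_s + \hat h B_s]\circ g_{\omega,s}\, g_{\omega,s}'\, ds = \int_0^\eps \hat L_s[\hat h' A_s + \hat h B_s]\, ds.
\]
Dividing by $\eps$, the claim $H_\eps \hat h/\eps \to q = \hat L[A_0 \hat h' + B_0 \hat h]$ in $C_0^1$ reduces to showing that $s \mapsto \hat L_s[\hat h' A_s + \hat h B_s]$ is continuous at $s=0$ in the $C^1$-norm, together with the observation that $\hat L_s$ maps into $C_0^1$ (zero average is automatic since $\hat L_s$ preserves the integral and the target of $H_\eps$ consists of differences of densities, hence has zero average).

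The key steps, in order, are: (i) justify the termwise differentiation and the interchange of $\sum_\omega$ with $\partial_\eps$ and with $\int_0^\eps$, which is where assumptions \eqref{a1}, \eqref{a2}, \eqref{a3} enter — they give uniform-in-$\eps$ summable bounds on $g_{\omega,\eps}'$, on $g_{\omega,\eps}''/g_{\omega,\eps}'$ (needed to control $C^1$-norms of the summands, since differentiating $\hat L_s \Psi$ spatially brings down a factor involving $g_{\omega,s}''/g_{\omega,s}'$), and on $\partial_\eps g_{\omega,\eps}^{i}$ for $i=1,2$ (which bound $A_s, B_s$ and their spatial derivatives uniformly); (ii) verify $A_s, B_s$ are bounded in $C^1$ uniformly for $s$ near $0$ and depend continuously on $s$, so that $\hat h' A_s + \hat h B_s \to \hat h' A_0 + \hat h B_0$ in $C^0$ (using $\hat h \in C^1$ from the spectral gap on $C^1$); (iii) show $\hat L_s \to \hat L_0$ in the operator norm on $\L(C^0, C^1)$ as $s \to 0$, or at least that $\hat L_s \Psi_s \to \hat L_0 \Psi_0$ in $C^1$ whenever $\Psi_s \to \Psi_0$ in $C^0$ with $\Psi_s$ bounded in $C^1$; combining (ii) and (iii) gives the continuity at $s=0$ and hence $\frac1\eps\int_0^\eps \hat L_s[\hat h' A_s + \hat h B_s]\,ds \to \hat L_0[\hat h' A_0 + \hat h B_0] = q$ in $C^1$.

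The main obstacle I expect is step (iii): controlling the convergence $\hat L_s \to \hat L$ in the $C^1$ topology rather than merely $C^0$. Spatially differentiating $\hat L_s \Psi = \sum_\omega \Psi\circ g_{\omega,s}\, g_{\omega,s}'$ produces $\sum_\omega \big(\Psi'\circ g_{\omega,s}(g_{\omega,s}')^2 + \Psi\circ g_{\omega,s}\, g_{\omega,s}''\big)$, and one must show this converges uniformly as $s\to 0$, term by term and with a summable dominating tail; the factor $g_{\omega,s}''= (g_{\omega,s}''/g_{\omega,s}')\cdot g_{\omega,s}'$ is handled by \eqref{a2} times \eqref{a1}, and the $\eps$-continuity of each inverse branch and its first two derivatives follows from the commutation relation \eqref{commB} together with the explicit formulas $\partial_\eps g_{\omega,\eps} = A_\eps\circ g_{\omega,\eps}$ and $\partial_\eps g_{\omega,\eps}' = B_\eps \circ g_{\omega,\eps}\, g_{\omega,\eps}'$ derived in the proof of Lemma~\ref{lem:3b}. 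Once these uniform estimates are assembled, dominated convergence for series finishes the argument; the remaining points (zero average, membership in $C_0^1$) are routine.
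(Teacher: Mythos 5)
Your overall architecture is the same as the paper's: expand $\hat L_\eps\hat h=\sum_\omega\hat h\circ g_{\omega,\eps}g_{\omega,\eps}'$, differentiate termwise in $\eps$ using the identity of Lemma~\ref{lem:3b}, and justify the interchange by the uniform summable bounds coming from \eqref{a1}, \eqref{a2}, \eqref{a3} together with the commutation relation \eqref{commB}; the paper runs this via the mean value theorem on each term plus normal convergence of $\sum_\omega\sup_\eps\|\partial_\eps(\hat h\circ g_{\omega,\eps}g_{\omega,\eps}')\|_{C^1}$, whereas you use the fundamental theorem of calculus and an integral representation $H_\eps\hat h=\int_0^\eps\hat L_s[\hat h'A_s+\hat h B_s]\,ds$ — a cosmetic difference. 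Two small corrections: you only invoke $\hat h\in C^1$, but since the $C^1$-norm of each term involves $\hat h''\circ g_{\omega,s}$ you need $\hat h\in C^2$, which is available from the spectral gap on $C^2$.

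The one step that would fail as literally stated is your fallback criterion in (iii). First, $\hat L_s$ does not map $C^0$ into $C^1$, so convergence ``in the operator norm on $\L(C^0,C^1)$'' does not parse. Second, the weaker condition ``$\hat L_s\Psi_s\to\hat L_0\Psi_0$ in $C^1$ whenever $\Psi_s\to\Psi_0$ in $C^0$ with $\Psi_s$ bounded in $C^1$'' is false: the spatial derivative $(\hat L_s\Psi_s)'$ contains $\sum_\omega\Psi_s'\circ g_{\omega,s}(g_{\omega,s}')^2$, and a $C^0$-small, $C^1$-bounded $\Psi_s$ (e.g.\ small-amplitude fast oscillation) need not have $\Psi_s'$ converging uniformly. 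What you actually need is $\Psi_s\to\Psi_0$ in $C^1$, equivalently the continuity in $s$, uniformly in $x$ on each branch, of $\partial_s g_{\omega,s}^{(j)}$ for $j=0,1,2$ (note that via \eqref{commB} the spatial derivative of $\partial_sg_{\omega,s}'$ is $\partial_sg_{\omega,s}''$, so the second $\eps$-derivative of the branch enters). This is exactly the continuity on $\Delta\times V$ that the paper assumes and uses in its step (i), so the gap is fixable, but the criterion you wrote down is not the one that closes it.
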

\begin{proof}
Recall that $H_\eps=\hat L_\eps-\hat L$ hence we need to show that $\eps\mapsto\hat L_\eps\hat h$ is differentiable as a $C^1$ element, on some neighborhood $V$ of $0$. To this end, recall that $\hat L_\eps\hat h = \sum_\omega\hat h \circ g_{\omega,\eps} g_{\omega,\eps}'$.
It suffices to show that

(i) for each $\omega$, the map $\eps\in V\mapsto \hat h\circ g_{\omega,\eps} g_{\omega,\eps}'\in C^1$ is differentiable;

(ii) the series $\sum_\omega \sup_{\eps\in V} \|\partial_\eps(\hat h\circ g_{\omega,\eps} g_{\omega,\eps}')\|_{C^1}<\infty$.

We first prove (i).
Drop for simplicity the subscript $\omega$ and write $g_\eps=g_{\omega,\eps}$ and let $f_\eps=\hat h\circ g_\eps g_\eps'$. We have
\[
\begin{split}
f_\eps &= \hat h\circ g_\eps g_\eps'\\
f_\eps' &= \hat h'\circ g_\eps (g_\eps' )^2+ \hat h \circ g_\eps g_\eps''.
\end{split}
\]
By the commutation relations given by assumption \eqref{commB} we have
\begin{equation}\label{eq:com}
\partial_\eps f_\eps^{(i)} = (\partial_\eps f_\eps)^{(i)},\quad i=0,1
\end{equation}
and these are continuous functions on $\Delta\times V$.\\

Let $\nu\in V$ and $\eps$ be small. We have
\begin{equation}\label{eq:W2norm}
\|f_{\eps+\nu}-f_\nu-\eps(\partial_\delta f_\delta|_{\delta=\nu})\|_{C^1}
=
\sum_{i=0}^1
\|f_{\eps+\nu}^{(i)}-f_\nu^{(i)}-\eps(\partial_\delta f_\delta|_{\delta=\nu})^{(i)}\|_{C^0}.
\end{equation}
For each $x$, by the mean value theorem, there exists $\eta_{x,\eps}^i$ such that
$f_{\eps+\nu}^{(i)}(x)-f_\nu^{(i)}(x)=\eps \partial_\delta f_\delta^{(i)}|_{\delta=\eta_{x,\eps}^i}$, with $|\eta_{x,\eps}^i-\nu|<\eps$. Therefore
\[
\sum_{i=0}^1
\|f_{\eps+\nu}^{(i)}-f_\nu^{(i)}-\eps(\partial_\delta f_\delta^{(i)}|_{\delta=\nu})\|_{C^0}
\le |\eps| \sum_{i=0}^1 \ \|\partial_\delta f_\delta^{(i)}|_{\delta={\eta_{\cdot,\eps}^i}}-\partial_\delta f_\delta^{(i)}|_{\delta=\nu}\|_{C^0}
=o(\eps).
\]
We conclude by \eqref{eq:W2norm} and the commutation relation \eqref{eq:com}. We now prove (ii).
\begin{equation}\label{eq:sum}
\sum_\omega\sup_{\eps\in V}\|\partial_\eps f_{\omega,\eps}\|_{C^1}
=
\sum_\omega\sup_{\eps\in V}\sum_{i=0}^1 \| \partial_\eps f_{\omega,\eps}^{(i)}\|_{C^0}.
\end{equation}
We write for $i=0,1$ 
\begin{equation}\label{eq:ww}
\partial_\eps f_{\omega,\eps}^{(i)} = 
\sum_{k=0}^{i+1} a_k^{(i)} \partial_\eps g_{\omega,\eps}^{(k)},
\end{equation}
where the coefficients $a_k^{(i)}$ are given respectively by
\[
a_0^{(0)} = \hat h'\circ g_{\omega,\eps} g_{\omega,\eps}'
,\quad
a_1^{(0)} = \hat h\circ g_{\omega,\eps}
\]
then differentiating again in space we get
\[
a_0^{(1)} = \hat h''\circ g_{\omega,\eps}g_{\omega,\eps}'^2+\hat h'\circ g_{\omega,\eps}g_{\omega,\eps}'',
\quad
a_1^{(1)} = 2\hat h'\circ g_{\omega,\eps}g_{\omega,\eps}',\quad
a_2^{(1)} = \hat h\circ g_{\omega,\eps}.
\]
By assumptions \eqref{a2} and \eqref{a4}, we have
 $a_0^{(i)} \le C g_{\omega,\eps }'$ and $a_k^{(i)}\le C$ for any $i=0,1$ and $k\neq0$. Moreover, by assumption \eqref{a3}, for $k=1,2$,
$$
\sum_\omega\sup_{\eps\in V}\sup_{x\in\Delta}|\partial_\eps g_{\omega,\eps}^{(k)}(x)| \le C. 
$$
Putting these estimates together with \eqref{eq:ww} imply that \eqref{eq:sum} is finite, proving (ii). Moreover, we have 
\[
\partial_\eps H_\eps \hat h|_{\eps=0} = \sum_\omega \partial_\eps(\hat h\circ g_{\omega,\eps} g_{\omega,\eps}')|_{\eps=0}=\hat L[A_0\hat h' + B_0 \hat h],
\]
where we have used~\eqref{eq:2}.
\end{proof}
\begin{lemma}\label{lem:0}
For any $\Phi\in C^1$ we have $\hat L_\eps\Phi\to\hat L\Phi$ in $C^1$ as $\eps\to0$.
\end{lemma}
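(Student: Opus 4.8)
The plan is to mimic the structure of the proof of (i)--(ii) inside Lemma~\ref{diff-lemma}, but now estimating the difference $\hat L_\eps\Phi-\hat L\Phi$ directly rather than a derivative. Writing $\hat L_\eps\Phi=\sum_\omega \Phi\circ g_{\omega,\eps}\, g_{\omega,\eps}'$ and $\hat L\Phi=\sum_\omega \Phi\circ g_{\omega,0}\, g_{\omega,0}'$, the term-by-term difference $f_{\omega,\eps}-f_{\omega,0}$, where $f_{\omega,\eps}=\Phi\circ g_{\omega,\eps}\,g_{\omega,\eps}'$, goes to $0$ in $C^1$ for each fixed $\omega$: indeed, by the commutation relation \eqref{commB} the functions $f_{\omega,\eps}^{(i)}$ and their $\eps$-derivatives are jointly continuous in $(x,\eps)$ on the compact set $\Delta\times\bar V'$ for any small closed neighborhood $V'\subset V$, so $\sup_x|f_{\omega,\eps}^{(i)}(x)-f_{\omega,0}^{(i)}(x)|\to 0$ as $\eps\to 0$ for $i=0,1$ by uniform continuity. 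This handles each individual branch.

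The point that needs care is uniformity of the tail of the series in $\eps$, so that one can interchange the limit $\eps\to 0$ with the summation over $\omega$. For this I would produce a summable (in $\omega$) bound on $\sup_{\eps\in V'}\|f_{\omega,\eps}\|_{C^1}$, which is exactly the estimate already extracted in the proof of Lemma~\ref{diff-lemma}: expanding $f_{\omega,\eps}^{(i)}$ for $i=0,1$ in terms of $g_{\omega,\eps}'$ and $g_{\omega,\eps}''$, using \eqref{a2} to write $g_{\omega,\eps}''\le C g_{\omega,\eps}'$, and using $\|\Phi\|_{C^2}$-type bounds on the coefficients (here $\Phi\in C^1$ suffices for $f_{\omega,\eps}\in C^1$ since only $\Phi'$ enters $f_{\omega,\eps}'$ together with $g_{\omega,\eps}''$), one gets $\|f_{\omega,\eps}\|_{C^1}\le C\,\sup_{x\in\Delta}|g_{\omega,\eps}'(x)|$. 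Then \eqref{a1} gives $\sum_\omega\sup_{\eps\in V'}\|f_{\omega,\eps}\|_{C^1}<\infty$. Given $\delta>0$, pick a finite set $F\subset\Omega$ with $\sum_{\omega\notin F}\sup_{\eps\in V'}\|f_{\omega,\eps}\|_{C^1}<\delta$; then $\|\hat L_\eps\Phi-\hat L\Phi\|_{C^1}\le\sum_{\omega\in F}\|f_{\omega,\eps}-f_{\omega,0}\|_{C^1}+2\delta$, and letting $\eps\to 0$ the finite sum vanishes, so $\limsup_{\eps\to0}\|\hat L_\eps\Phi-\hat L\Phi\|_{C^1}\le 2\delta$. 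Since $\delta$ is arbitrary, $\hat L_\eps\Phi\to\hat L\Phi$ in $C^1$.

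The main obstacle, such as it is, is purely the bookkeeping of the tail estimate: making sure the constant $C$ in $\|f_{\omega,\eps}\|_{C^1}\le C\sup_x|g_{\omega,\eps}'(x)|$ is genuinely independent of $\omega$ and $\eps$, which relies on the uniform bounds \eqref{a2} and \eqref{a4} (equivalently the $\omega$-uniform sup-bound implicit there) together with $\Phi$ being fixed in $C^1$. Everything else is a standard "uniform tail plus pointwise convergence of finitely many terms" argument, and there is no analytic difficulty beyond what was already done in Lemma~\ref{diff-lemma}.
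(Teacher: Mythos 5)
Your proposal is correct and follows essentially the same route as the paper: termwise continuity in $\eps$ (the paper's point (i), which it borrows from Lemma~\ref{diff-lemma}) combined with normal convergence of the series via the identity $(\Phi\circ g_{\omega,\eps}g_{\omega,\eps}')'=\Phi'\circ g_{\omega,\eps}g_{\omega,\eps}'^2+\Phi\circ g_{\omega,\eps}g_{\omega,\eps}''$ and conditions \eqref{a1}--\eqref{a2}. You merely make explicit the standard ``uniform tail plus finitely many terms'' interchange that the paper leaves implicit.
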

\begin{proof}
We have $\hat L_\eps\Phi = \sum_\omega\Phi\circ g_{\omega,\eps} g_{\omega,\eps}'$.
It suffices to show that for some neighborhood $V$ of $0$,

(i) for each $\omega$, the map $\eps\in V\mapsto \Phi\circ g_{\omega,\eps} g_{\omega,\eps}'\in C^1$ is continuous in $\eps$;

(ii) the series $\sum_\omega \sup_{\eps\in V} \|\Phi\circ g_{\omega,\eps} g_{\omega,\eps}' \|_{C^1}<\infty$.
We skip the proof of (i) since it is similar to (i) in the proof of Lemma  \ref{diff-lemma}. (ii) follows from 
the identity 
\[
(\Phi\circ g_{\omega,\eps} g_{\omega,\eps}')' =
\Phi'\circ g_{\omega,\eps} g_{\omega,\eps}'^2 + \Phi\circ g_{\omega,\eps} g_{\omega,\eps}''
\]
and conditions \eqref{a1} and \eqref{a2}.
\end{proof}
\begin{lemma}\label{lem:G}
We have $G_\eps(q)\to G_0(q)$ in $C^0$ and $G_\eps$ is uniformly bounded in $\L(C_0^1,C^0)$
\end{lemma}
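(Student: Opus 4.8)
The plan is to obtain both statements from a single uniform resolvent bound. First I would recall the setup: $\hat L_\eps$ acts on $C^1$ with a spectral gap, so on the codimension-one subspace $C_0^1$ of zero-average functions (the complement of the fixed direction $\hat h_\eps$), some iterate $\hat L_\eps^{N}$ contracts with a rate $\theta<1$. The key point is that this can be made \emph{uniform in $\eps$} on a small enough neighbourhood $V$ of $0$: by Lemma~\ref{lem:0}, $\hat L_\eps\to\hat L$ in $\L(C^1)$, and a standard perturbation argument for operators with a spectral gap (e.g. via the resolvent $(z-\hat L_\eps)^{-1}$ being continuous in $\eps$ on a contour separating the leading eigenvalue $1$ from the rest of the spectrum) gives a common $N$ and a common $\theta<1$ with $\|\hat L_\eps^N\|_{\L(C_0^1)}\le\theta$ for all $\eps\in V$. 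Then on $C_0^1$ the Neumann series $G_\eps=(I-\hat L_\eps)^{-1}=\sum_{k\ge0}\hat L_\eps^k$ converges geometrically, and grouping the sum in blocks of length $N$ yields $\|G_\eps\|_{\L(C_0^1)}\le \frac{1}{1-\theta}\sum_{j=0}^{N-1}\|\hat L_\eps^j\|_{\L(C^1)}\le C$, uniformly in $\eps\in V$. Since the inclusion $C^1\hookrightarrow C^0$ is bounded, this already gives $\|G_\eps\|_{\L(C_0^1,C^0)}\le C$, which is the second statement.

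For the first statement, $G_\eps(q)\to G_0(q)$ in $C^0$, I would write the telescoping identity
\[
G_\eps(q)-G_0(q) = G_\eps\,(\hat L_\eps-\hat L)\,G_0(q) = G_\eps\,(\hat L_\eps-\hat L)\,\hat h^{\#},
\]
where $\hat h^{\#}:=G_0(q)\in C_0^1$ is a fixed element (indeed $q\in C_0^1$ since $\hat L$ preserves averages, so $G_0(q)$ makes sense). Now $(\hat L_\eps-\hat L)\hat h^{\#}\to 0$ in $C^1$ as $\eps\to 0$ by Lemma~\ref{lem:0} applied to $\Phi=\hat h^{\#}$, and it lies in $C_0^1$; composing with the uniformly bounded operators $G_\eps\in\L(C_0^1,C^0)$ from the previous paragraph gives
\[
\|G_\eps(q)-G_0(q)\|_{C^0}\le \|G_\eps\|_{\L(C_0^1,C^0)}\,\|(\hat L_\eps-\hat L)\hat h^{\#}\|_{C^1}\le C\,\|(\hat L_\eps-\hat L)\hat h^{\#}\|_{C^1}\xrightarrow[\eps\to0]{}0,
\]
which is the first statement.

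The step I expect to be the main obstacle is establishing the \emph{uniform} spectral gap, i.e. the existence of a single $N$ and $\theta<1$ working for all $\eps\in V$; one must be a little careful that the leading eigenprojection also varies continuously so that $C_0^1$ is genuinely invariant under $\hat L_\eps$ (here this is automatic because $C_0^1=\{\Phi\in C^1:\int_\Delta\Phi\,dm=0\}$ is $\hat L_\eps$-invariant for every $\eps$, since $\hat L_\eps$ preserves the integral, so no moving-projection issue actually arises). Once that uniform bound is in hand, everything else is a short telescoping argument using Lemma~\ref{lem:0}. Alternatively, if one prefers to avoid invoking abstract perturbation theory, the uniform gap can be extracted from a Lasota--Yorke / Doeblin--Fortet inequality for $\hat L_\eps$ on $C^1$ with constants uniform in $\eps$ (which follow from assumptions \eqref{a1} and \eqref{a2}, exactly as in \cite{Li}), combined with the uniform convergence of densities; this is the route I would actually write up.
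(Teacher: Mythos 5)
Your proposal is correct and follows essentially the same route as the paper: both arguments rest on the uniform spectral gap of $\hat L_\eps$ on $C_0^1$ to get $\sup_\eps\|G_\eps\|_{\L(C_0^1,C^0)}<\infty$, and both use the resolvent identity $G_\eps-G_0=G_\eps(\hat L_\eps-\hat L)G_0$ together with Lemma~\ref{lem:0} applied to $\Phi=(I-\hat L)^{-1}(q)$. The only difference is that you spell out how the uniform gap is obtained (perturbation of the resolvent or uniform Lasota--Yorke estimates), which the paper simply asserts.
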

\begin{proof}
We use the fact that the family of operators $\hat L_\eps$ has a uniform spectral gap on $C_0^1$, 
for $\eps$ in a neighborhood of $0$. Hence, these operators are invertible on this space and we have 
$\|(1-\hat L_\eps)^{-1} \|_{C_0^1\to C_0^1}\le C<\infty$.
This proves in particular the second statement. Note that
\[
(G_\eps-G_0)(q) = (I-\hat L_\eps)^{-1} (\hat L_\eps-\hat L)(1-\hat L)^{-1}(q).
\]
By Lemma~\ref{lem:0} with $\Phi=(I-\hat L)^{-1}(q)$ and the previous observations this proves the first statement.
\end{proof}
\subsection{Proof of Theorem~\ref{thm:A}}\label{ptha}
We first prove a  lemma that will be used in the linear response formula in Theorem \ref{thm:A}. 
\begin{lemma}\label{lem:3a}
For any differentiable function $\Phi$, the function $\Phi\circ g_{\omega,\eps} g_{\omega,\eps}'$ is differentiable with respect to $\eps$
and we have on $[0,1]$
\begin{equation}\label{eq:2.0}
\partial_\eps (\Phi\circ g_{\omega,\eps}g_{\omega,\eps}') = \Phi'\circ g_{\omega,\eps} \partial_\eps g_{\omega,\eps}g_{\omega,\eps}' + \Phi\circ g_{\omega,\eps} \partial_\eps g_{\omega,\eps}'.
\end{equation}
\end{lemma}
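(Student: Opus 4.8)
The plan is to mirror the proof of Lemma~\ref{lem:3b}, since Lemma~\ref{lem:3a} is simply the analogous differentiation statement for the inducing scheme of Subsection~\ref{ss:T} rather than the countable-branch map of Subsection~\ref{ss:hatT}. First I would record that the relevant commutation relation here is \eqref{comm}, which guarantees that $\partial_\eps g_{\omega,\eps}^{(j)} = (\partial_\eps g_{\omega,\eps})^{(j)}$ for $j=0,1$ (this follows from \eqref{comm} applied to $g_{0,\eps}$ and $g_{1,\eps}$ together with the chain rule, since $g_{\omega,\eps}=g_{1,\eps}\circ g_{0,\eps}^{n}$). With differentiability of $\eps\mapsto g_{\omega,\eps}$ and $\eps\mapsto g_{\omega,\eps}'$ in hand, the product rule applied to $\Phi\circ g_{\omega,\eps}\cdot g_{\omega,\eps}'$ gives
\begin{equation*}
\partial_\eps (\Phi\circ g_{\omega,\eps}g_{\omega,\eps}')
=
\partial_\eps(\Phi\circ g_{\omega,\eps})\,g_{\omega,\eps}' + \Phi\circ g_{\omega,\eps}\,\partial_\eps g_{\omega,\eps}',
\end{equation*}
and the chain rule $\partial_\eps(\Phi\circ g_{\omega,\eps})=\Phi'\circ g_{\omega,\eps}\,\partial_\eps g_{\omega,\eps}$ yields exactly \eqref{eq:2.0}.

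The one point requiring a touch more care than in Lemma~\ref{lem:3b} is that the formula must be asserted on all of $[0,1]$, not just on $\Delta$: here $g_{\omega,\eps}$ maps $\Delta$ into the cylinder $[\omega]_\eps\subset\Delta$, but in the operator $F_\eps$ of \eqref{theF} the composition is evaluated on $[0,1]\setminus\Delta$. So I would note that each inverse branch $g_{\omega,\eps}=g_{1,\eps}\circ g_{0,\eps}^n$ extends naturally as a $C^3$ (in $x$) and differentiable-in-$\eps$ map on $[0,1]$, since $g_{0,\eps}$ and $g_{1,\eps}$ are the inverse branches of the onto maps $T_{0,\eps}\colon[0,1/2]\to[0,1]$ and $T_{1,\eps}\colon[1/2,1]\to[0,1]$ and are therefore defined on the whole of $[0,1]$. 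The differentiation argument above is then valid pointwise on $[0,1]$.

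I would then close by observing that, unlike Lemma~\ref{lem:3b}, no simplification of $\partial_\eps g_{\omega,\eps}$ and $\partial_\eps g_{\omega,\eps}'$ in terms of $A_\eps, B_\eps$ is available in general, because the induced map $\hat T_\eps$ here is built from $T_\eps$ by composing branches and there is no single clean relation of the form $\hat T_\eps\circ g_{\omega,\eps}=\mathrm{id}$ with a uniformly nice $\hat T_\eps$; this is precisely why the statement is left in the ``raw'' form \eqref{eq:2.0}, with the quantities $a_\omega=\partial_\eps g_{\omega,\eps}|_{\eps=0}$ and $b_\omega=\partial_\eps g_{\omega,\eps}'|_{\eps=0}$ of \eqref{eq:Q} kept abstract and controlled instead through the summability assumptions \eqref{a4.0} and \eqref{a5}. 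There is no real obstacle in the proof itself; the only thing to be careful about is invoking \eqref{comm} (rather than \eqref{commB}) to justify swapping $\partial_\eps$ and $\partial_x$, and making explicit that the identity holds on all of $[0,1]$ via the extension of the inverse branches described above.
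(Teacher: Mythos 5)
Your proof is correct and follows essentially the same route as the paper, which simply differentiates via the product and chain rules exactly as in the computation \eqref{diffeps} of Lemma~\ref{lem:3b}. The additional remarks about extending the inverse branches to all of $[0,1]$ and about invoking \eqref{comm} rather than \eqref{commB} are sensible clarifications but do not change the argument.
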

\begin{proof}
The proof follows by differentiating with respect to $\eps$ and is similar to \eqref{diffeps}.
\end{proof}
\noindent {\bf Strategy of the proof of Theorem \ref{thm:A}.} The argument starts from the first order expansion for $\hat h_\eps$ in $C^0$ 
\[
\hat h_\eps = \hat h + \eps \hat h^* + o(\eps).
\]
Using this, we then obtain, by the second statement of Lemma~\ref{3rdstep} below and relation ~\eqref{eq:density} the following expansion in $\B$
\[
h_\eps =F_\eps(\hat h_\eps) = F_\eps(\hat h) + \eps F_\eps(\hat h^*) + o(\eps).
\]
Finally, we obtain by Lemma~\ref{4thstep} below and the first statement of Lemma~\ref{3rdstep} below the first order expansion of $h_\eps$ in $\B$
\[
h_\eps = h + \eps( Q\hat h + F_0(\hat h^*)) + o(\eps),
\]
which finishes the proof of the theorem. 
\begin{lemma}\label{4thstep}
The map $\eps\mapsto F_{\eps}\hat h$ is differentiable as an element in $\mathcal B$ and $\partial_\eps F_\eps\hat h|_{\eps=0} = Q\hat h$, where $Q$ is defined in \eqref{eq:Q}.
\end{lemma}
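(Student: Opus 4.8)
The plan is to follow the same two-step scheme used in the proof of Lemma~\ref{diff-lemma}: reduce the differentiability of the series defining $F_\eps\hat h$ to (i) termwise differentiability and (ii) summability of the derivatives, but now measuring everything in the weighted norm $\|\cdot\|_{\mathcal B}$ rather than in $C^1$. Recall that for $x\notin\Delta$ we have $F_\eps\hat h(x) = \sum_{\omega\in\Omega}\hat h\circ g_{\omega,\eps}(x)\,g_{\omega,\eps}'(x)$, and on $\Delta$ the map $F_\eps\hat h$ equals $\hat h$, which is independent of $\eps$; so the only issue is on $[0,1]\setminus\Delta$. First I would fix $\omega$ and show that $\eps\mapsto \hat h\circ g_{\omega,\eps}\,g_{\omega,\eps}'\in\mathcal B$ is differentiable with derivative given by Lemma~\ref{lem:3a}, i.e.
\[
\partial_\eps\bigl(\hat h\circ g_{\omega,\eps}\,g_{\omega,\eps}'\bigr)
= \hat h'\circ g_{\omega,\eps}\,\partial_\eps g_{\omega,\eps}\,g_{\omega,\eps}'
+ \hat h\circ g_{\omega,\eps}\,\partial_\eps g_{\omega,\eps}'.
\]
As in the proof of Lemma~\ref{diff-lemma}, this is a mean value theorem argument: write the increment $\hat h\circ g_{\omega,\eps+\nu}g_{\omega,\eps+\nu}' - \hat h\circ g_{\omega,\nu}g_{\omega,\nu}'$, apply the MVT pointwise in the $\eps$-variable, and use continuity of the partial derivatives (which holds by the commutation relation~\eqref{comm}) to get the remainder to be $o(\eps)$ in $\mathcal B$; here one must check the $\mathcal B$-norm estimate, i.e.\ that $\sup_{x\in(0,1]}|x^\gamma(\cdots)|$ is controlled, which follows from~\eqref{a4} and~\eqref{a4.0} together with the fact that $\hat h,\hat h'$ are continuous hence bounded on $\Delta$.

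For step (ii) I would bound $\sum_\omega\sup_{\eps\in V}\|\partial_\eps(\hat h\circ g_{\omega,\eps}g_{\omega,\eps}')\|_{\mathcal B}$ using the explicit formula above. The first term, $\hat h'\circ g_{\omega,\eps}\,\partial_\eps g_{\omega,\eps}\,g_{\omega,\eps}'$, is bounded in $\mathcal B$-norm by $\|\hat h'\|_{C^0(\Delta)}\cdot\sup_{\eps,x}|\partial_\eps g_{\omega,\eps}(x)|\cdot\|g_{\omega,\eps}'\|_{\mathcal B}$, and the sum over $\omega$ of this is finite by~\eqref{a4.0} and~\eqref{a4'}. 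The second term, $\hat h\circ g_{\omega,\eps}\,\partial_\eps g_{\omega,\eps}'$, is bounded by $\|\hat h\|_{C^0(\Delta)}\cdot\|\partial_\eps g_{\omega,\eps}'\|_{\mathcal B}$, whose sum over $\omega$ is finite by~\eqref{a5}. Hence the series of derivatives converges absolutely in $\mathcal B$, uniformly in $\eps\in V$; combined with (i) this yields that $\eps\mapsto F_\eps\hat h$ is differentiable as a $\mathcal B$-valued map, with derivative at $\eps=0$ obtained by differentiating termwise:
\[
\partial_\eps F_\eps\hat h|_{\eps=0}
= (1-1_\Delta)\sum_\omega\Bigl(\hat h'\circ g_\omega\,a_\omega\,g_\omega' + \hat h\circ g_\omega\,b_\omega\Bigr) = Q\hat h,
\]
using $a_\omega=\partial_\eps g_{\omega,\eps}|_{\eps=0}$, $b_\omega=\partial_\eps g_{\omega,\eps}'|_{\eps=0}$ and the definition~\eqref{eq:Q} of $Q$.

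The main obstacle, as compared with Lemma~\ref{diff-lemma}, is that we are now working in the singular weighted space $\mathcal B$ rather than in $C^1$: the density $\hat h$ of the induced map is only continuous on $\Delta$ (not $C^1$ on all of $[0,1]$), and the functions $g_{\omega,\eps}'$ and $\partial_\eps g_{\omega,\eps}'$ can blow up near $0$, which is precisely why the norm $\|f\|_{\mathcal B}=\sup_x|x^\gamma f(x)|$ is needed to absorb the singularity. One must be careful that the elementary estimates $a_0^{(i)}\le Cg_{\omega,\eps}'$ etc.\ used in Lemma~\ref{diff-lemma} are here replaced by $\mathcal B$-norm bounds, and that the hypotheses invoked are exactly~\eqref{a4},~\eqref{a4.0},~\eqref{a4'},~\eqref{a5} (rather than~\eqref{a1}--\eqref{a3}), since it is only for these that the single-$\gamma$ weighted summability is assumed. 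Everything else is a routine repetition of the mean value theorem plus dominated convergence argument already carried out above.
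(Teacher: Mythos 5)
Your proposal follows essentially the same route as the paper: the same reduction to (i) termwise differentiability in $\mathcal B$ via the mean value theorem and the commutation relation \eqref{comm}, and (ii) absolute summability of the $\eps$-derivatives using the formula of Lemma~\ref{lem:3a} together with \eqref{a4}, \eqref{a4.0}, \eqref{a4'} and \eqref{a5}, exactly as in the paper's proof. The only slight wrinkle is your closing remark that $\hat h$ is ``only continuous'' on $\Delta$, whereas your estimates (and the paper's) in fact use that $\hat h$ is $C^1$ on $\Delta$ to bound $\hat h'\circ g_{\omega,\eps}$ --- which is legitimate since the induced map has a spectral gap on $C^1$ --- so this is an inconsistency in your commentary rather than in the argument itself.
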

\begin{proof}
It suffices to show that

(i) for each $\omega$, the map $\eps\in V\mapsto \hat h\circ g_{\omega,\eps} g_{\omega,\eps}'\in\mathcal B$ is differentiable;

(ii) the series $\sum_\omega \sup_{\eps\in V} \|\partial_\eps(\hat h\circ g_{\omega,\eps} g_{\omega,\eps}')\|_{\mathcal B}<\infty$.\\

We skip the proof of (i) as, by using \eqref{comm}, it follows similar steps as in the proof of (i) in Lemma \ref{diff-lemma}. For (ii), using \eqref{eq:2.0} of Lemma \ref{lem:3a} we have
\begin{equation}
\begin{split}
 \sum_\omega \sup_{\eps\in V} \|\partial_\eps(\hat h\circ g_{\omega,\eps} g_{\omega,\eps}')\|_{\mathcal B}&\le\sum_\omega \sup_{\eps\in V} \|\hat h'\circ g_{\omega,\eps} \cdot\partial_{\eps} g_{\omega,\eps}\cdot g'_{\omega,\eps}\|_{\mathcal B}\\
 &+\sum_\omega \sup_{\eps\in V} \|\hat h\circ g_{\omega,\eps}\cdot\partial_\eps g_{\omega,\eps}'\|_{\mathcal B}\\
 &\le C\sum_\omega \sup_{\eps\in V} \| g_{\omega,\eps}'\|_{\mathcal B}+C\sum_\omega \sup_{\eps\in V} \|\partial_\eps g_{\omega,\eps}'\|_{\mathcal B},
\end{split}
\end{equation}
where we have used the fact that $\hat h$ is $C^1$ and assumptions \eqref{a4} and \eqref{a4.0}. The rest of the proof follows from assumptions \eqref{a4'} and \eqref{a5}. 
\end{proof}
\begin{lemma}\label{3rdstep}
$F_\eps(\hat h^*)\to F_0(\hat h^*)$ in $\mathcal B$ and $F_\eps$ is uniformly bounded in $\L(C^0,\mathcal B)$.
\end{lemma}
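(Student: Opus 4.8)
The plan is to prove both statements by the same mechanism as Lemma~\ref{4thstep}: reduce everything to the termwise structure of $F_\eps$ and use the summability assumptions~\eqref{a4'} and~\eqref{a4.0}. Recall that $F_\eps(\Phi) = 1_\Delta\Phi + (1-1_\Delta)\sum_\omega \Phi\circ g_{\omega,\eps}\, g_{\omega,\eps}'$. For the \emph{uniform boundedness} statement, fix $\Phi\in C^0$; on $\Delta$ the first term contributes $\|1_\Delta\Phi\|_{\mathcal B}\le C\|\Phi\|_{C^0}$ since multiplication by $x^\gamma$ is bounded on the bounded interval $\Delta=[1/2,1]$. For the second term, I would estimate
\[
\Big\|\sum_\omega \Phi\circ g_{\omega,\eps}\, g_{\omega,\eps}'\Big\|_{\mathcal B}
\le \|\Phi\|_{C^0}\sum_\omega \sup_{\eps\in V}\|g_{\omega,\eps}'\|_{\mathcal B},
\]
which is finite and independent of $\eps$ by assumption~\eqref{a4'}. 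This gives $\|F_\eps\|_{\L(C^0,\mathcal B)}\le C$ uniformly in $\eps\in V$, proving the second statement.

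For the \emph{convergence} statement $F_\eps(\hat h^*)\to F_0(\hat h^*)$ in $\mathcal B$, I would split
\[
F_\eps(\hat h^*)-F_0(\hat h^*)
= (1-1_\Delta)\sum_\omega\big(\hat h^*\circ g_{\omega,\eps}\, g_{\omega,\eps}' - \hat h^*\circ g_{\omega,0}\, g_{\omega,0}'\big),
\]
the $1_\Delta$-term being independent of $\eps$. The strategy is dominated convergence for series: for each fixed $\omega$, the continuity of $\eps\mapsto g_{\omega,\eps}$ and $\eps\mapsto g_{\omega,\eps}'$ (which follows from the commutation relation~\eqref{comm} and the existence/continuity of the $\eps$-derivatives in~\eqref{a4.0}, exactly as in step (i) of Lemma~\ref{diff-lemma}) together with continuity of $\hat h^*\in C^0$ gives $\|\hat h^*\circ g_{\omega,\eps}\, g_{\omega,\eps}' - \hat h^*\circ g_{\omega,0}\, g_{\omega,0}'\|_{\mathcal B}\to 0$ as $\eps\to0$. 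This is where one genuinely uses that $\hat h^*$ is merely $C^0$: the composition $\hat h^*\circ g_{\omega,\eps}$ converges uniformly because $g_{\omega,\eps}\to g_{\omega,0}$ uniformly and $\hat h^*$ is uniformly continuous on $[0,1]$. The uniform (in $\omega,\eps$) bound $\|\hat h^*\circ g_{\omega,\eps}\, g_{\omega,\eps}'\|_{\mathcal B}\le \|\hat h^*\|_{C^0}\sup_{\eps\in V}\|g_{\omega,\eps}'\|_{\mathcal B}$, summable over $\omega$ by~\eqref{a4'}, supplies the dominating sequence, so we may interchange the limit and the sum and conclude convergence in $\mathcal B$.

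The main obstacle — such as it is — is the termwise continuity claim $\|\hat h^*\circ g_{\omega,\eps}\,g_{\omega,\eps}' - \hat h^*\circ g_{\omega,0}\,g_{\omega,0}'\|_{\mathcal B}\to0$, since here the target regularity ($C^0$ only) is weaker than in Lemma~\ref{diff-lemma}; one cannot differentiate $\hat h^*$ and must instead argue directly via uniform continuity of $\hat h^*$ and uniform convergence $g_{\omega,\eps}\to g_{\omega,0}$ (the latter itself obtained by integrating $\partial_\eps g_{\omega,\eps}$, which is bounded by~\eqref{a4.0}). Everything else is a routine application of the Weierstrass $M$-test / dominated convergence for series against the summability hypotheses already invoked in Lemma~\ref{4thstep}, so I would keep that part brief and refer back to the proof of Lemma~\ref{diff-lemma}(i) for the details of the termwise estimate.
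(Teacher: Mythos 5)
Your proposal is correct and follows essentially the same route as the paper: the uniform boundedness estimate via~\eqref{a4'} is identical, and the convergence statement is likewise obtained from termwise uniform convergence of $\hat h^*\circ g_{\omega,\eps}\,g_{\omega,\eps}'$ (using $C^1$ convergence of the inverse branches and mere continuity of $\hat h^*$) combined with the normal convergence supplied by~\eqref{a4'}. The only difference is that you spell out the dominated-convergence mechanism and the source of the $C^1$ convergence in slightly more detail than the paper does.
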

\begin{proof} 
To prove uniform boundedness we use assumption \eqref{a4'} to get, for $\Phi \in C^0$,
\begin{equation*}
\begin{split}
||F_\eps(\Phi)||_{\mathcal B}&=||1_\Delta\Phi + (1-1_\Delta)\sum_{\omega\in\Omega}\Phi\circ g_{\omega,\eps}g_{\omega,\eps}'||_{\mathcal B}\\
&\le ||\Phi||_{C^0}+ ||\Phi||_{C^0}\sum_{\omega\in\Omega}\sup_{\eps\in V}||g_{\omega,\eps}'||_{\mathcal B}\le C||\Phi||_{C^0}.
\end{split}
\end{equation*}
Next, the map $g_{\omega,\eps}$ converges to $g_{\omega,0}$ in the $C^1$ norm. Hence for the continuous function $\Phi=\hat h^*\in C^0$ we have
$\Phi \circ g_{\omega,\eps}g_{\omega,\eps}'$ converges uniformly to
$\Phi \circ g_{\omega,0}g_{\omega,0}'$. This together with the normal convergence above shows the continuity of $F_\eps(\hat h^*)\in\mathcal{B}$ at $\eps=0$.
\end{proof}

\section{Verifying the assumptions for Pomeau-Manneville type maps}\label{LSVmaps}
We verify the assumptions of Section \ref{setup} for the family of intermittent maps studied by Liverani-Saussol-Vaienti \cite{LSV} which is a version of the Pomeau-Manneville family \cite{PM}. Let $0<\alpha<\infty$, and define
\begin{equation}\label{LSV-map}
T_{\alpha}(x)=\begin{cases}
       x(1+2^{\alpha}x^{\alpha}) \quad x\in[0,\frac{1}{2}]\\
       2x-1 \quad \quad \quad x\in(\frac{1}{2},1]
       \end{cases}.
\end{equation}
Note that $x=0$ is a neutral fixed point for the map $T_\alpha$ which is consequently a non-uniformly expanding map of the interval (on two pieces). Following Korepanov \cite{K}, we use the following notation
$$
\text{logg}(n)=\begin{cases}
      1\quad\quad\quad n\le e\\
       \log (n) \quad n>e
       \end{cases},
       $$
 and we let $E_{\alpha}:[0,1/2]\to[0,1]$, $E_{\alpha}x=T_{\alpha}x$ be the left branch of $T_\alpha$. Let $z\in[0,1]$, and write $z_{n}:=E^{-n}_{\alpha}(z)$; $z:=z_0$. Let $\hat T_{\omega}:={\hat T_{\alpha}|}_{[\omega]}$ as defined in Section \ref{setup}, then $\hat T_{\omega}(z)=E_{\alpha}^n(T_{\alpha}(z))=E_{\alpha}^n(2z-1)$ for $z\in[\omega]$, and for $z\in [1/2,1]$ $T_{\alpha}(g_{\omega}(z))=2g_{\omega}(z)-1=z_n$. Note that $z_0=z$, $z'_0=1$, $z''_0=z'''_0=0$, for $n\ge 1$ $z_n\le 1/2$, and 
\begin{equation}\label{rec}
z_n=z_{n+1}(1+2^{\alpha}z^\alpha_{n+1});
\end{equation}
\begin{equation}\label{drec}
z_n'=(1+(\alpha+1)2^{\alpha}z^\alpha_{n+1})z'_{n+1}.
\end{equation}
It is well known, see for example \cite{young99}, that $z_n\sim \frac{1}{2 \alpha^{1/\alpha}}n^{-1/\alpha}$. In \cite{K} Korepanov proved
\begin{lemma}\label{korep}
We have
\begin{enumerate}
\item $\frac{C}{n}z_0^{\alpha}\le z^{\alpha}_n\le\frac{C}{n}$, and $-\log(z_n)\le C[\text{logg}(n)-\log z_0]$;
\item 
\begin{equation}\label{list1} 
0\le z'_n\le C (1+nz_0^\alpha\alpha2^{\alpha})^{-1/\alpha-1};
\end{equation}
\item $0\le\frac{z''_n}{z'_n}\le Cz_0^{-2}/\max\{n,1\}$;
\item $\frac{\pa z_n}{z_n}\le C\normalfont\text{logg}(n)[\text{logg}(n)-\log z_0]$ and
\begin{equation}\label{list2} 
\pa z_n\le  C\frac{\normalfont\text{logg}(n)}{n^{1/\alpha}}[ \normalfont\text{logg}(n)-\log z_0];
\end{equation}
\item $|\frac{\pa z'_n}{z'_n}|\le C(\normalfont\text{logg}(n))^2[\text{logg}(n)-\log z_0];$
\item $|\frac{\pa z''_n}{z'_n}|\le Cz_0^{-2}(\normalfont\text{logg}(n))^2[\text{logg}(n)-\log z_0]$.
\end{enumerate}
\end{lemma}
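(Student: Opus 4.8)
\emph{Proof proposal (reconstructing the estimates of Korepanov \cite{K}).} The common engine is that each of $z_n'$, $z_n''/z_n'$, $\pa z_n$, $\pa z_n'$, $\pa z_n''$ satisfies, through the recursions~\eqref{rec} and~\eqref{drec} and their derivatives in $z$ and in $\alpha$, a relation that telescopes into a closed sum or product, into which one substitutes the estimates already proved. I would first pin down the growth of $z_n$: from~\eqref{rec}, $z_{n+1}^{-\alpha}-z_n^{-\alpha}=z_{n+1}^{-\alpha}\bigl(1-(1+2^\alpha z_{n+1}^\alpha)^{-\alpha}\bigr)$. Since $t\mapsto(1+t)^{-\alpha}$ is convex, $1-(1+t)^{-\alpha}\le\alpha t$; since $t\mapsto 1-(1+t)^{-\alpha}$ is concave it dominates its chord on $[0,1]$, so $1-(1+t)^{-\alpha}\ge(1-2^{-\alpha})t$ there; and $t=2^\alpha z_{n+1}^\alpha\in[0,1]$ because $z_{n+1}\le\tfrac12$ for all $n\ge0$. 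Using $z_{n+1}^{-\alpha}\cdot 2^\alpha z_{n+1}^\alpha=2^\alpha$, the increment $z_{n+1}^{-\alpha}-z_n^{-\alpha}$ therefore lies between $(1-2^{-\alpha})2^\alpha$ and $\alpha 2^\alpha$; summing, $z_0^{-\alpha}+cn\le z_n^{-\alpha}\le z_0^{-\alpha}+Cn$, which (as $z_0\le1$) is comparable with $z_0^{-\alpha}+n$. Item~(1) is immediate, and $-\log z_n\le C(\logg(n)-\log z_0)$ follows by taking logarithms and using $-\log z_0\ge0$, $\logg(n)\ge1$.

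For item~(2) I would telescope~\eqref{drec} to $z_n'=\prod_{j=1}^n(1+(\alpha+1)2^\alpha z_j^\alpha)^{-1}$ and~\eqref{rec} to $z_n/z_0=\prod_{j=1}^n(1+2^\alpha z_j^\alpha)^{-1}$, so that
\[
z_n'=\Bigl(\tfrac{z_n}{z_0}\Bigr)^{\alpha+1}\,\prod_{j=1}^n\frac{(1+2^\alpha z_j^\alpha)^{\alpha+1}}{1+(\alpha+1)2^\alpha z_j^\alpha}.
\]
Each factor of the product equals $1+O(z_j^{2\alpha})$ and $\sum_j z_j^{2\alpha}<\infty$ uniformly by item~(1), hence $z_n'\asymp(z_n/z_0)^{\alpha+1}$; combined with item~(1) this gives the stated formula. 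For item~(3), differentiating~\eqref{drec} once more in $z$ and telescoping (using $z_0''=0$) gives $z_n''/z_n'=-\sum_{j=1}^n\frac{\alpha(\alpha+1)2^\alpha z_j^{\alpha-1}z_j'}{1+(\alpha+1)2^\alpha z_j^\alpha}$; bounding $z_j^{\alpha-1}z_j'\le Cz_0^{-1-\alpha}z_j^{2\alpha}$ by item~(2) and summing $z_j^{2\alpha}$ with item~(1) produces the bound. (Here $z_n''/z_n'\le 0$, since $z_n$ is a composition of concave increasing maps, so item~(3) should be read in absolute value.)

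Items~(4)--(6) follow the same pattern, now differentiating~\eqref{rec}, \eqref{drec} and their spatial derivatives in $\alpha$, with the boundary conditions $\pa z_0=\pa z_0'=\pa z_0''=0$. The one new feature is that $\pa(2^\alpha z_j^\alpha)=2^\alpha z_j^\alpha\bigl(\log2+\log z_j+\alpha\,\pa z_j/z_j\bigr)$, so every $\alpha$-differentiation of such a factor releases a $\log z_j$, controlled by $O(\logg(j)-\log z_0)$ via item~(1). Telescoping the resulting identities and inserting items~(1)--(3) together with the lower $\alpha$-estimates, one finds that each successive $\alpha$-differentiation costs one additional power of $\logg(n)$ and one factor $\logg(n)-\log z_0$, while the $z_0^{-2}$ in item~(6) is inherited from the $z_j^{\alpha-1}$ term exactly as in item~(3).

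The telescoping-and-substitution steps are routine; the real work --- and the main obstacle --- lies in items~(4)--(6): one must justify pulling $\pa$ inside the infinite products and series, and verify that the $\logg$- and $\log z_0$-factors combine to precisely the claimed exponents while the series still converge at the stated rates. This is a finite but delicate piece of bookkeeping, and it is where I expect most of the effort to go.
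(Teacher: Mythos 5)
The first thing to say is that the paper itself contains no proof of Lemma~\ref{korep}: it is imported verbatim from Korepanov \cite{K}, so there is no in-paper argument to compare yours against. Judged on its own terms, your reconstruction of items (1) and (2) is correct and is indeed the standard route: the two-sided bound on the increments of $z_n^{-\alpha}$, and then the exact products $z_n'=\prod_{j\le n}(1+(\alpha+1)2^\alpha z_j^\alpha)^{-1}$ and $z_n/z_0=\prod_{j\le n}(1+2^\alpha z_j^\alpha)^{-1}$ compared factor by factor using $\sum_j z_j^{2\alpha}<\infty$. Your identification of the general engine (telescoping the recursions \eqref{rec}, \eqref{drec} and their $z$- and $\alpha$-derivatives) is also the right one, and matches the fragments the paper does reproduce (\eqref{first-partial}, \eqref{es1}).

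The gaps are in (3)--(6). For (3), your telescoping gives $z_n''/z_n'=-\sum_{j=1}^n\alpha(\alpha+1)2^\alpha z_j^{\alpha-1}z_j'\,(1+(\alpha+1)2^\alpha z_j^\alpha)^{-1}$, whose absolute value is a nondecreasing sum of positive terms; no such quantity can satisfy a bound of the form $Cz_0^{-2}/\max\{n,1\}$, and your own computation yields only the $n$-independent bound $Cz_0^{-1-\alpha}$ (a sharper summation, $\sum_j(1+jz_0^\alpha)^{-2}\le z_0^\alpha$, gives $Cz_0^{-1}$). So the phrase ``produces the bound'' is false as written: you should either flag the quoted statement as inaccurate (a uniform-in-$n$ bound is all that assumption \eqref{a2} needs, since there $z_0\in[1/2,1]$) or supply a genuinely different argument. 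For (4)--(6) you prove nothing, and the mechanism you gesture at is the wrong one: telescoping into a sum of increment bounds (as in \eqref{es1}) again yields a quantity monotone in $n$ and therefore cannot produce the decay $n^{-1/\alpha}$ claimed in \eqref{list2}. That decay comes from solving the damped affine recursion $\pa z_{n+1}=(\pa z_n+b_{n+1})/(1+(\alpha+1)2^\alpha z_{n+1}^\alpha)$ exactly, i.e. $\pa z_n=z_n'\sum_{j\le n}b_j/z_{j-1}'$ with $b_j=2^\alpha z_j^{\alpha+1}(-\log 2z_j)$, and then exploiting the decay of $z_n'$ from \eqref{list1}; the analogous exact identities are what drive (5) and (6), where the $\logg$ factors are then accounted for term by term, not by a ``one $\logg$ per differentiation'' heuristic. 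Since (4)--(6) are precisely the estimates that carry the content of the lemma for the verification of \eqref{a3}, deferring them as routine bookkeeping leaves the main work undone.
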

The above list shows that our assumptions \eqref{a1},  \eqref{a2}, \eqref{a3} and \eqref{a4} are satisfied for the LSV family. We still have to show that assumptions \eqref{a4.0}, \eqref{a4'} and \eqref{a5} hold. This will be done in the following lemma. 
\begin{lemma}\label{uniform}
Let\footnote{Here $\alpha_0$ is understood as the parameter corresponding to the unperturbed map; i.e., equivalent to the case $\eps=0$ in Section \ref{setup}.}  $\alpha_0<\gamma$. Let $U$ be a neighbourhood of $\alpha_0$ such that $\gamma\notin U$. We have
\begin{enumerate}
\item $\sum_n\sup_{\alpha\in U}\sup_{z\in(0,1]}|z^\gamma( g^n)'(z)|<C$;
\item $\sup_{n}\sup_{\alpha\in U}\sup_{z\in(0,1]}|\pa g^n(z)|<\infty$;
\item $\sum_n\sup_{\alpha\in U}\sup_{z\in(0,1]}|z^\gamma\pa  (g^n)'(z)|<\infty$.
\end{enumerate}
\end{lemma}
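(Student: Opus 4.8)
The three claims are uniform-in-$\alpha$ (and uniform-in-$z$) estimates for the LSV family, and each of them is the ``$\sup_\alpha$'' version of a bound that already appears, for a fixed parameter, in Lemma~\ref{korep} (Korepanov's list). So the strategy is: revisit the proof of each item of Lemma~\ref{korep} and check that every constant $C$ appearing there can be chosen uniformly for $\alpha$ ranging in the compact neighbourhood $U$ of $\alpha_0$. The key structural inputs are the recursions \eqref{rec} and \eqref{drec}, together with their $\partial_\alpha$-differentiated versions, and the asymptotics $z_n\sim \frac{1}{2\alpha^{1/\alpha}}n^{-1/\alpha}$; the point is that all the implied constants in these asymptotics depend continuously on $\alpha$, hence are bounded on $U$.

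\textbf{Step 1 (claim~a).} Since $(g^n)'(z)=z_n'$ in the notation of Section~\ref{LSVmaps} (up to the harmless factor $2$ coming from the right branch), claim~(a) is exactly $\sum_n\sup_{\alpha\in U}\sup_z |z^\gamma z_n'|<\infty$. From \eqref{list1} we have $0\le z_n'\le C(1+n z_0^\alpha\alpha 2^\alpha)^{-1/\alpha-1}$, so $z_0^\gamma z_n'\le C z_0^\gamma (1+ n z_0^\alpha\alpha2^\alpha)^{-1/\alpha-1}$. The plan is to split according to whether $n z_0^\alpha\le 1$ or not. When $nz_0^\alpha\le 1$ one has $z_0^\alpha\le 1/n$, hence $z_0^\gamma\le n^{-\gamma/\alpha}$ (using $\gamma>\alpha$ on $U$ so the exponent is used in the right direction), which is summable once $\gamma/\alpha>1$; the worst case is $\alpha$ near $\sup U<\gamma$, still giving a uniform exponent $\gamma/\sup U>1$. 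When $nz_0^\alpha>1$, bound $(1+nz_0^\alpha\alpha2^\alpha)^{-1/\alpha-1}\le C (nz_0^\alpha)^{-1/\alpha-1}$ and then $z_0^\gamma (nz_0^\alpha)^{-1/\alpha-1}= n^{-1/\alpha-1} z_0^{\gamma-\alpha(1/\alpha+1)}=n^{-1/\alpha-1}z_0^{\gamma-1-\alpha}$, which is bounded in $z_0\in(0,1]$ as soon as $\gamma-1-\alpha\ge$ something controllable — here one may need to absorb a possibly negative power of $z_0$ into the fact that $nz_0^\alpha>1$ forces $z_0\ge n^{-1/\alpha}$, turning $z_0^{\gamma-1-\alpha}$ into at most $n^{-(\gamma-1-\alpha)/\alpha}$ when that exponent is negative; combined with $n^{-1/\alpha-1}$ this is summable. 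In every sub-case the exponents depend continuously on $\alpha$ and stay in the summable range uniformly on the compact set $U$.

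\textbf{Step 2 (claims~b and~c).} Claim~(b) asks for a uniform bound on $\partial_\alpha g^n=\partial_\alpha z_n$, which by item~(4) of Lemma~\ref{korep} satisfies $\partial_\alpha z_n\le C\,\mathrm{logg}(n)\,n^{-1/\alpha}[\mathrm{logg}(n)-\log z_0]$; since $\mathrm{logg}(n)[\mathrm{logg}(n)-\log z_0]\,n^{-1/\alpha}$ is bounded in $n$ for each $\alpha$ (the polynomial decay $n^{-1/\alpha}$ beats the logarithmic factors) and $-\log z_0$ appears only when $z_0$ is small, in which case $z_0^\gamma$ kills it — but claim~(b) has no weight, so one must instead note that $-\log z_0\le \mathrm{logg}(n)+C$ on the relevant range $z_0\ge z_n\ge n^{-1/\alpha}\cdot C$, i.e. use item~(1) of Lemma~\ref{korep} to convert $-\log z_0$ into $C\,\mathrm{logg}(n)$, and then $n^{-1/\alpha}(\mathrm{logg}(n))^3\le C$ uniformly on $U$. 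Claim~(c) is the same kind of estimate applied to $\partial_\alpha z_n'$: from items~(2) and~(5) of Lemma~\ref{korep}, $|\partial_\alpha z_n'|\le |z_n'|\cdot|\partial_\alpha z_n'/z_n'|\le C\,(1+nz_0^\alpha)^{-1/\alpha-1}(\mathrm{logg}(n))^2[\mathrm{logg}(n)-\log z_0]$, and then $z_0^\gamma$ times this is summable by exactly the argument of Step~1, the extra $(\mathrm{logg}(n))^3$-type factor being harmless against the polynomial gain $n^{-1/\alpha-1}$ and, where it matters, against the $z_0^{\gamma}$ weight which dominates any power of $-\log z_0$.

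\textbf{Main obstacle.} The only real work is bookkeeping the uniformity: one must make sure that in each of Korepanov's estimates the constants are continuous functions of $\alpha$ — this comes down to checking that the finitely many comparisons ($z_n\sim c_\alpha n^{-1/\alpha}$, $(1+t)^{-1/\alpha-1}\le C_\alpha t^{-1/\alpha-1}$ for $t\ge 1$, etc.) have constants $c_\alpha, C_\alpha$ that are locally bounded in $\alpha$, hence bounded on the compact $U$ — and that the exponents $1/\alpha$, $\gamma/\alpha$, $(\gamma-1-\alpha)/\alpha$ that control summability remain strictly on the good side of the summability threshold uniformly over $\alpha\in U$; this is where the hypothesis $\alpha_0<\gamma$ and the choice $\gamma\notin U$ are used, guaranteeing $\sup_{\alpha\in U}\alpha<\gamma$ so that the critical exponent stays bounded away from $1$. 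No new dynamical idea is needed beyond re-reading the proof of Lemma~\ref{korep} with a parameter.
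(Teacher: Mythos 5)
Your parts (a) and (c) are essentially sound: the paper leaves the summability check in (a) implicit, and for (c) it derives a telescoped recursion for $\pa z_n'/z_n'$ from \eqref{drec} rather than quoting items (2) and (5) of Lemma~\ref{korep} as you do, but your route works provided you make the weight-splitting explicit (write $z_0^\gamma=z_0^{\gamma-\delta}z_0^{\delta}$ with $0<\delta<\gamma-\sup U$ so that $z_0^{\delta}(-\log z_0)\le C$ and the remaining exponent $(\gamma-\delta)/\alpha$ stays above $1$). Part (b), however, has a genuine gap. You bound $\pa z_n$ by item (4) of Lemma~\ref{korep}, which carries the factor $[\logg(n)-\log z_0]$, and since claim (b) has no weight $z^\gamma$ you try to convert $-\log z_0$ into $C\logg(n)$ via the alleged lower bound $z_0\ge z_n\ge Cn^{-1/\alpha}$. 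That lower bound is false: item (1) only gives $z_n^{\alpha}\ge \frac{C}{n}z_0^{\alpha}$, i.e.\ $z_n\ge Cn^{-1/\alpha}z_0$, and indeed $z_n\to 0$ as $z_0\to 0$ for every fixed $n$. Since the supremum in (b) runs over all $z\in(0,1]$, the factor $-\log z_0$ is unbounded on the relevant range, so the bound of item (4) blows up as $z_0\to0$ for each fixed $n$ and cannot yield (b), even though $\pa z_n$ itself stays bounded there.

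The paper closes this case by not using item (4) at all. Differentiating the recursion \eqref{rec} in $\alpha$ gives $0<\pa z_{j+1}-\pa z_j\le 2^{\alpha}z_{j+1}^{\alpha+1}(-\log 2z_{j+1})$, and telescoping with $\pa z_0=0$ yields $\pa z_{n+1}\le 2^{\alpha}\sum_{j=1}^{n+1}z_j^{\alpha+1}(-\log 2z_j)$. The key point is that the logarithm is now attached to $z_j$, not to $z_0$: the function $t\mapsto t^{\alpha+1}(-\log 2t)$ is increasing near $0$, so the $z_0$-free upper bound $z_j^{\alpha}\le C/j$ from item (1) gives $z_j^{\alpha+1}(-\log 2z_j)\le Cj^{-1-1/\alpha}\log j$ uniformly in $z_0$, and the resulting series converges uniformly over $\alpha\in U$. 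You need this (or some other device that trades $-\log z_0$ for quantities vanishing with $z_0$) to repair part (b).
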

\begin{proof}
For (a), by \eqref{list1}, we have
$$\sum_n\sup_{\alpha\in U}\sup_{z\in[0,1]}|z^\gamma (g^n)'(z)|\le C\sum_n\sup_{\alpha\in U}\sup_{z\in(0,1]}z^\gamma  (1+nz^\alpha\alpha2^{\alpha})^{-1/\alpha-1}<C.$$
For (b), we only discuss the case for $z\in(0,1/2]$. The other case is the same\footnote{In fact Lemma 5.6 of \cite{K} provides an estimate which works only for $z\in(1/2,1]$.}, with a small change in notation. Using \eqref{rec} we have
\begin{equation}\label{first-partial}
\pa z_{j+1}=\frac{\pa z_j+2^{\alpha}z_{j+1}^{\alpha+1}(-\log 2z_{j+1})}{1+(\alpha+1)2^{\alpha}z_{j+1}^{\alpha}}>0.
\end{equation}
Consequently
$$0<\pa z_{j+1}-\pa z_j\le2^{\alpha}z_{j+1}^{\alpha+1}(-\log 2z_{j+1}).$$
Noticing that $\pa z_0=0$, and summing up, we get
\begin{equation}\label{es1}
\pa z_{n+1}\le 2^{\alpha}\sum_{j=1}^{n+1}z_{j}^{\alpha+1} (-\log 2z_{j}).
\end{equation}
Therefore, using \eqref{es1}, we have
\begin{equation}\label{es2}
\begin{split}
&\sup_{n}\sup_{\alpha\in U}\sup_{z\in[0,1/2]}|\pa g^n|\\
&\le C\sup_n\sup_{\alpha\in U}\sum_{j=1}^{n}((j)^{-1/\alpha})^{\alpha+1}(-\log (j)^{-1/\alpha})\\
&\le C\sup_n\sup_{\alpha\in U}\sum_{j=1}^{n}j^{-1-1/\alpha}\log (j)<\infty.
\end{split}
\end{equation}
For (c), using the commutation relation $\partial_\alpha z_{n}' = (\partial_\alpha z_{n})^{'}$, \eqref{first-partial} and \eqref{drec}, we get
$$\frac{\pa z'_j}{z'_j}-\frac{\pa z'_{j+1}}{z'_{j+1}}=\frac{2^{\alpha}z_{j+1}^{\alpha}+(\alpha+1)2^{\alpha}z_{j+1}^{\alpha}\log(2z_{j+1})+\alpha(\alpha+1)2^{\alpha}z_{j+1}^{\alpha-1}\pa z_{j+1}}{1+(\alpha+1)2^{\alpha}z_{j+1}^{\alpha}}.$$
Noticing that $\pa z_0=0$, and summing up, we get
$$-\frac{\pa z'_{n+1}}{z'_{n+1}}=\sum_{j=0}^{n}\frac{2^{\alpha}z_{j+1}^{\alpha}+(\alpha+1)2^{\alpha}z_{j+1}^{\alpha}\log(2z_{j+1})+\alpha(\alpha+1)2^{\alpha}z_{j+1}^{\alpha-1}\pa z_{j+1}}{1+(\alpha+1)2^{\alpha}z_{j+1}^{\alpha}},$$
which is equivalent to
$$-\pa z'_{n+1} =z'_{n+1}\sum_{j=1}^{n+1}\frac{2^{\alpha}z_{j}^{\alpha}+(\alpha+1)2^{\alpha}z_{j}^{\alpha}\log(2z_{j})+\alpha(\alpha+1)2^{\alpha}z_{j}^{\alpha-1}\pa z_{j}}{1+(\alpha+1)2^{\alpha}z_{j}^{\alpha}}.$$
Therefore,
\begin{equation}\label{es3}
\begin{split}
\sum_{n}\sup_{\alpha\in U}\sup_{z\in(0,1/2]}|z^{\gamma}\pa (g^n)'|&\le C\sum_{n=1}^{\infty}\sup_{\alpha\in U}\sup_{z\in(0,1/2]}|z^{\gamma}\cdot z'_{n}|\sum_{j=1}^n z_j^{\alpha}\\
&+C\sum_{n=1}^{\infty}\sup_{\alpha\in U}\sup_{z\in(0,1/2]}|z^{\gamma}\cdot z'_{n}|\sum_{j=1}^n z_j^{\alpha}|\log(z_j)|\\
&+C\sum_{n=1}^{\infty}\sup_{\alpha\in U}\sup_{z\in(0,1/2]}|z^{\gamma}\cdot z'_{n}|\sum_{j=1}^n z_j^{\alpha-1}|\pa z_j|\\
&:=(I)+(II)+(III).
\end{split}
\end{equation}
We use \eqref{list1} to show that $(I)$ and $(II)$ are finite, and \eqref{list1}, \eqref{es1} to show that $(III)$ is finite. Indeed,
$$(I)\le C\sum_{n=1}^{\infty}\sup_{\alpha\in U}\sup_{z\in(0,1/2]}z^{\gamma}\cdot(1+n z^\alpha\alpha2^{\alpha})^{-1/\alpha-1}\sum_{j=1}^n j^{-1}<\infty;$$
$$(II)\le C\sum_{n=1}^{\infty}\sup_{\alpha\in U}\sup_{z\in(0,1/2]} z^{\gamma}\cdot(1+n z^\alpha\alpha2^{\alpha})^{-1/\alpha-1}\sum_{j=1}^n j^{-1}\log(j) <\infty;$$
\begin{equation*}
(III)\le C\sum_{n=1}^{\infty}\sup_{\alpha\in U}\sup_{z\in(0,1/2]}z^{\gamma}\cdot(1+n z^\alpha\alpha2^{\alpha})^{-1/\alpha-1}\sum_{j=1}^nj^{-1}\sum_{k=1}^{j}k^{-1-1/\alpha}\log k<\infty.
\end{equation*}
\end{proof}

\section*{Acknowledgements}
We would like to thank Dalia Terhesiu for useful discussions during ESI workshop ``Thermodynamic Formalism and Mixing'', in particular for encouraging us to incorporate the infinite measure case.

\end{document}